\newcommand{\kewemmdef}{\newcommand{\kew}{q}\newcommand{\emm}{m}\renewcommand{\phi}{\varphi}\newcommand{\CITE}{\cite}\newcommand{\text}{\mbox}}
\newtheorem{theorem}{Theorem}
\newtheorem{definition}[theorem]{Definition}
\newtheorem{proposition}[theorem]{Proposition}
\newtheorem{remark}[theorem]{Remark}
\newenvironment{proof}
{\textit{Proof.} }{\hfill$\Box$\par\addvspace{\medskipamount}}
\newenvironment{completionofproof}
{\textit{Completion of proof.} }{\hfill$\Box$\par\addvspace{\medskipamount}}
\begin{document}
\title{Multiresolution wavelet analysis of integer scale Bessel functions }
\author{S. Albeverio\\
Institut f\"ur Angewandte Mathematik\\ Universit\"at Bonn\\ Wegelerstr. 6, D-53115, Bonn, Germany\\
SFB 256, Bonn, BiBos (Bielefeld-Bonn)\\
IZKS Bonn, CERFIM\\ Locarno and ACC. ARCH (USI)\\ Switzerland
\and
P.E.T. Jorgensen\\
Mathematics Department\\University of Iowa\\Iowa City, IA 52242, USA%
\and
A.M. Paolucci\\
Max-Planck-Institut f\"ur Mathematik\\Vivatsgasse 7, 53111 Bonn\\Germany
}
\date{\today}
\maketitle
AMS Subject Classification: (2000) 42C40, 37G99, 11B37, 46E22, 32G07.
Key words: mathematical methods in physics, wavelet, deformations, spectral theory,
special functions, recurrence relations, Hilbert space.\\
AIP classification: mathematical methods in physics 02.30.-f, 02.30.Mv,
02.30.Tb, 02.30.Uu, 02.50.Cw

\begin{abstract}
We identify multiresolution subspaces giving rise via Hankel transforms to
Bessel functions. They emerge as orthogonal systems derived from geometric
Hilbert-space considerations, the same way the wavelet functions from a
multiresolution scaling wavelet construction arise from a scale of Hilbert
spaces. We study the theory of representations of the $C^{\ast }$-algebra $%
O_{\nu +1}$ arising from this multiresolution analysis. A connection with Markov
chains and representations of  $O_{\nu +1}$ is found. Projection valued measures arising from
the multiresolution analysis give rise to a Markov trace for quantum groups 
$SO_q$.
\end{abstract}

\section{\label{INT}Introduction}

The starting point for the multiresolution analysis from wavelet theory is a
system $U$, $\left\{ T_{j}\right\} _{j\in {\bf Z}}$, of unitary operators
with the property that the underlying Hilbert space ${\cal H}$ with norm $\left\| .\right\|$,
contains a
vector $\phi \in {\cal H}$, $\left\| \phi \right\| =1$, satisfying
\begin{equation}
U\phi =\sum_{j}a_{j}T_{j}\phi \label{eqpound.1}
\end{equation}
for some sequence $\left\{ a_{j}\right\} $ of complex scalars,  such that, in
particular (\ref{eqpound.1}) converges in ${\cal H}$.
In addition, the operator system $\left\{ U,T_{j}\right\} $ must
satisfy a non-trivial commutation
relation. In the case of wavelets,
it is
\begin{equation}
UT_{j}U^{-1}=T_{N},\qquad j\in{\bf Z}, \label{eqpound}
\end{equation}
where $N$ is the \emph{scaling number}, or
equivalently the number of \emph{subbands}
in the corresponding multiresolution.
When this structure is present, there is a way to recover the
spectral theory of the problem at hand from representations of an associated
$C^{\ast}$-algebra. In the case of
orthogonal wavelets, we may take this
$C^{\ast}$-algebra to be the Cuntz algebra.
In that case, the operators $T_{j}$ may be represented
on $L^{2}\left( {\bf R}\right) $ as translations,
\[
\left( T_{j}\xi\right) \left( x\right) =\xi\left( x-j\right) ,\qquad
\xi\in L^{2}\left( {\bf R}\right) ,
\]
and $U$ may be taken as the scaling $\left( U\xi\right) \left( x\right) =
N^{-1/2}\xi\left( x/N\right)$, $N \in \textbf{N} $.
This system clearly satisfies (\ref{eqpound}).
(For a variety of other examples of these relations, the
reader is referred to Ref.\ \CITE{Jor01}.
The setup there applies to dynamical
systems of $N$-to-$1$ Borel measurable self-maps:
for example, those of complex
dynamics and Julia sets.)
In the wavelet case, a
multiresolution is built from
a solution $\phi\in L^{2}\left( {\bf R}\right) $ to the
scaling identity (\ref{eqpound.1}). The numbers
$\left\{ a_{j}\right\} _{j\in {\bf Z}}$ from (\ref{eqpound.1}) must then
satisfy the "orthogonality relations"'
\begin{equation}
\sum_{k\in\mathbf{Z}}a_{k}=1,\qquad\sum_{k\in\mathbf{Z}}\bar{a}_{k}a_{k+2m}=\delta_{0,m}, m\in\textbf Z
\label{eqpound.2}
\end{equation}
In this case, the analysis is based
on the Fourier transform: define $m_{0}$ as a map from $S^1$  to $\textbf{C}$ by
\begin{equation}
m_{0}\left( e^{it}\right) =\sum_{k}a_{k}e^{ikt}, t\in\textbf{R}
\label{eqpound.3}
\end{equation} 
(of course we assume here and below convergence of the series and products involved).
Then (in the wavelet case, following Ref.\ \CITE{dau})
a solution to (\ref{eqpound.1}) will have the product form
\begin{equation}
\hat{\phi}\left( t\right) =\prod_{j=1}^{\infty}m_{0}\left( t/N^{j}\right) ,
\label{eqpound.4}
\end{equation}
up to a constant multiple. The Cuntz algebra $O_{N}$
enters the picture as follows: Formula (\ref{eqpound.4})
is not practical for computations, and the analysis of orthogonality
relations is done better by reference to the Cuntz relations,
see (\ref{eqCunpound.1})--(\ref{eqCunpound.2}) below.
Setting, for $\xi\in\textbf{C}$, $j\in\textbf{Z}$
\begin{equation}
W\left( \left\{ \xi_{j}\right\} \right) :=\sum_{j\in\textbf{Z}}\xi_{j}
\phi\left( x-j\right) ,
\label{eqpound.5}
\end{equation}
and using (\ref{eqpound.2}), we get an
isometry $W$ of $\ell^{2}$ into a subspace
of $L^{2}\left( {\bf R}\right) $, the
resolution subspace. Setting
\begin{equation}
\left( S_{0}f\right) \left( z\right) :=\sqrt{N}m_{0}\left( z\right)
f\left( z^{N}\right) ,\qquad f\in L^{2}\left( {\bf T}\right) , \rm{Borel\ measurable}
\label{eqpound.6}
\end{equation}
and using $L^{2}\left( {\bf T}\right) \cong\ell^{2}$
by the Fourier series, we establish
the following crucial intertwining identity:
\begin{equation}
WS_{0}=UW,
\label{eqpound.7}
\end{equation}
so that $U$ is a unitary extension
of the isometry $S_{0}$. We
showed in Refs.\ \CITE{BEJ} and \CITE{Br-Jo2} that
functions $m_{1},\dots,m_{N-1}\in L^{\infty}\left( {\bf T}\right) $
may then be chosen such that the corresponding matrix
\begin{equation}
\left( m_{j}\left( e^{i\left( t+k2\pi /N\right) }\right) \right)
_{j,k=0}^{N-1}
\label{eqpound.8}
\end{equation}
is in ${\rm U}_{N}\left( {\bf C}\right) $ for Lebesgue a.a.\ $t$.
Then it follows that the operators
\begin{equation}
S_{j}f\left( z\right) :=\sqrt{N}m_{j}\left( z\right)
f\left( z^{N}\right) ,\qquad f\in L^{2}\left( {\bf T}\right) ,
\label{eqpound.9}
\end{equation}
will yield a representation of the Cuntz relations;
see (\ref{eqCunpound.1})--(\ref{eqCunpound.2}) below. Conversely, if
(\ref{eqpound.9}) is given to satisfy the Cuntz relations, then the
matrix in (\ref{eqpound.8}) takes values in
${\rm U}_{N}\left( {\bf C}\right) $.

The present paper aims at an analogous construction, but
based instead on the Bessel functions, i.e., we use the
Bessel functions in (\ref{eqpound.3}) in place of the usual
Fourier basis $\left\{ e^{ikt}\right\} _{k\in{\bf Z}}$;
see (\ref{m0}) below.
If $\nu\in\textbf N$ is the parameter of the Bessel function $J_{\nu}$,
then we show that $N=\nu +1$ is an admissible scaling
for a multiresolution construction.

The motivation for doing a
multiresolution construction based on
a wider variety of special
functions, other than the Fourier
basis, derives in part from the
rather restrictive axiom system
dictated by the traditional setting \CITE{{BJMP05}, {Hut81}, {Jor01}, {Jor06},{JoKr03},{LaWa96}}.
It is namely known\cite{dau} that
many applications require
a more general mathematical setup. Moreover,
our present approach also throws some new
light on special-function theory, and
may be of independent interest for
that reason.

We will apply multiresolutions to the Hankel transform and the
Bessel functions of integer parameter $\nu $. Our analysis is
especially well suited for the introduction of a quantum variable $q$,( $0<q<1$), in
such a way that variations in $q$ lead to a better understanding of an
associated family of deformations.
Our use of the Cuntz algebra is motivated by
Refs.\ \CITE{BEJ} and \CITE{Br-Jo}. The Cuntz algebras \cite{Cu} have
been used independently in operator algebra theory
and in the study of multiresolution wavelets,
and our present paper aims to
both make this connection explicit, and as
well make use of it in the analysis of
special functions. The $q$-deformations
of the special functions\cite{Bie,Ch,Di,Ex,Is,McF}
may be of independent interest.
This deformation is related to,
but different from, those which
have appeared in Refs.\ \CITE{McF,JSW,Jo-We,K-VA,RS}.
In the last sections of the paper we construct a Markov chain which turns out
to be related to the representations of $O_{\nu +1}$ discussed in the previous sections
via  projection valued measures. Random walks on quantum group $SO_q(N)$ are then
constructed via representations of the braid groups.

\section{\label{CUN}The Cuntz algebra and iterated function systems}

We
shall\textit{}
consider representations $\pi $ of the Cuntz algebra $O_{\nu +1}$ coming
from multiresolution analysis based on Hankel transforms. In Section
\ref{HAN}
we
give some preliminaries on Hankel transforms on $L^{2}\left( {\bf R}\right) $%
. We then construct wavelets arising from multiresolutions with scaling $\nu
+1$ using Hankel transforms on $L^{2}\left( {\bf C}\right) $, relative to an
appropriate measure on the field of complex numbers ${\bf C}$. The map from wavelets into representations
is described. We establish connections between certain representations of $%
O_{\nu +1}$ and Hankel wavelets arising from that multiresolution analysis.

Recall that $O_{\nu +1}$ is the $C^{\ast }$-algebra generated by $\nu +1$, $%
\nu \in {\bf N}$, isometries $S_{0},\dots ,S_{\nu }$ satisfying
\begin{equation}
S_{i}^{\ast }S_{j}=\delta _{ij}{\bf 1} \label{eqCunpound.1}
\end{equation}
and
\begin{equation}
\sum_{i=0}^{\nu }S_{i}S_{i}^{\ast }={\bf 1}. \label{eqCunpound.2}
\end{equation}
The representations we will consider are realized on the Hilbert spaces
$H=L^{2}\left( \Omega ,d\mu \right) $ where $\Omega $ is a measure space (to
be specified below) and $\mu $ is a probability measure on $\Omega $.

We define the representations in terms of certain maps
\begin{equation}
\sigma _{i}\colon \Omega \longrightarrow \Omega
\quad \mbox{such that}\quad
\Omega = \bigcup_{i=0}^{\nu}\sigma_{i}\left( \Omega \right)
\quad \mbox{and}\quad
\mu \left( \sigma _{i}\left( \Omega \right) \cap \sigma _{j}\left( \Omega
\right) \right) =0
\label{eqCunpound.3}
\end{equation}
for all $i\neq j$. We will apply this in Section \ref{MUL} to the Riemann surface of
$\sqrt[N]{z}$.

In Section
\ref{KEW}
we develop a $q$-parametric multiresolution wavelet analysis in $%
L^{2}\left( {\bf C},\mu _{q}\right) $ where $\mu _{q}$ is a $q$-measure,
as in Refs.\ \CITE{RS, Ga-Ra} by using $q$-Hankel transforms.

A class of
$q$-parametric
representations of the $C^{\ast }$-algebra $O_{\nu +1}$
is found. We further identify a class of representations of the Cuntz algebra
which has the structure of compact quantum groups of type B.\CITE{Pa-La}

\section{\label{HAN}Hankel transforms and a multiresolution analysis}

In this section we construct a multiresolution using Hankel transforms. We
start by giving some basic definitions on Hankel tranforms.

Let us recall that the Hankel transform of order $\alpha \in \textbf R$ of a function $f$%
, denoted by $\tilde{f}$, is defined, for $t\in\ ( 0,\infty )$ and $x\in ( 0,\infty )$, by
\begin{equation}
\tilde{f}\left( t\right) =\int_{0}^{\infty }J_{\alpha }\left( xt\right)
f\left( x\right) x\,dx ,   \label{e1.1}
\end{equation}%
where
\[
J_{\alpha }\left( x\right) =\left( \frac{x}{2}\right) ^{\alpha
}\sum_{k=0}^{\infty }\frac{\left( -1\right) ^{k}}{k!\Gamma \left( \alpha
+k+1\right) }\left( \frac{x}{2}\right) ^{2k}
\]%
is the Bessel function of order $\alpha $, $\alpha\in\textbf{R} $ and
\[
\Gamma \left( z\right) =\int_{0}^{\infty }e^{-t}t^{z-1}\,dt,\qquad
\mathop{\rm Re}%
\left( z\right) >0,
\]%
is the classical gamma function.
If we multiply both sides of (\ref{e1.1}) by $J_{\alpha }\left( yt\right) t$
and integrate from $t=0$ to $+\infty $ we obtain%
\begin{equation}
\int_{0}^{\infty }J_{\alpha }\left( yt\right) \tilde{f}\left( t\right)
t\,dt=f\left( y\right) =\int_{0}^{\infty }J_{\alpha }\left( yt\right)
t\int_{0}^{\infty }J_{\alpha }\left( xt\right) f\left( x\right) x\,dx\,dt, y\in ( 0,\infty )
\label{e1.2}
\end{equation}%
The integral transform on the left-hand side of (\ref{e1.2}) is equal to $%
f\left( y\right) $ for suitable functions $f$, by the Hankel inversion
theorem.\cite{IS} The resulting double integral is called the Hankel
Fourier-Bessel integral
\begin{equation}
f\left( y\right) =\int_{0}^{\infty }J_{\alpha }\left( yt\right) \left(
\int_{0}^{\infty }J_{\alpha }\left( xt\right) f\left( x\right) x\,dx\right)
t\,dt.
\label{eA}
\end{equation}%
It can be written as the following transform pair
\begin{eqnarray}
g\left( t\right)  &=&\int_{0}^{\infty }J_{\alpha }\left( yt\right) f\left(
y\right) y\,dy,  \label{e2.1'} \\
f\left( y\right)  &=&\int_{0}^{\infty }J_{\alpha }\left( yt\right) g\left(
t\right) t\,dt.  \nonumber
\end{eqnarray}

A Plancherel type result can be easily derived for this transform: if $%
F(\rho )$ and $G(\rho )$, $\rho\in ( 0,\infty )$, are Hankel transforms of $f(x)$ and $g(x)$, $x\in ( 0,\infty )$,
respectively, then we have%
\begin{eqnarray*}
\int_{0}^{\infty }\rho F(\rho )G(\rho )\,d\rho &=&\int_{0}^{\infty }\rho
F(\rho )\int_{0}^{\infty }xg(x)J_{\nu }(\rho x)\,dx\,d\rho \\
&=&\int_{0}^{\infty }xg(x)\left( \int_{0}^{\infty }\rho F(\rho )J_{\nu
}\left( \rho x\right) \,d\rho \right) \,dx\\
&=&\int_{0}^{\infty }xf(x)g(x)\,dx.
\end{eqnarray*}

Let us give some preliminaries on the standard multiresolution wavelet
analysis of scale $\nu $, $\nu \in {\bf N}$. Following Refs.\
\CITE{dau,BJ}
we define scaling by $\nu $ on $L^{2}\left( {\bf R}\right) $ by
\[
\left( U\xi \right) \left( x\right) =\left( \nu +1\right) ^{-\frac{1}{2}}\xi
\left( \frac{x}{\nu +1}\right)
\]
and translation by $1$ on $L^{2}\left( {\bf R}\right) $ by
\[
\left( T\xi \right) \left( x\right) =\xi \left( x-1\right) , x\in\textbf R.
\]

As mentioned in the Introduction,
it is our aim here to adapt
the theory of multiresolutions from
wavelet theory\cite{dau,ma} to the
analysis of the Bessel functions via the
Hankel transform. The classical theory\cite{As-Is}
is based on recurrence
algorithms which we show adapt
very naturally to the multiresolutions.
But our analysis will still be
based on the ``classical'' identities
for the special functions
(see, e.g., Refs.\ \CITE{Ga-Ra,Hendriksen,K-S,Pou,VA,W,Is2}).

A scaling function is a Borel measurable function $\phi \in L^{2}\left( {\bf R}\right) $ such
that if $V_{0}$ is the closed linear span of all translates $T^{k}\phi $, $%
k\in {\bf Z}$, then $\phi $ has the following four properties

\begin{enumerate}
\item[i)] $\left\{ T^k\phi :k\in {\bf Z}\right\} $ is an orthonormal set in $%
L^2\left( {\bf R}\right) $;

\item[ii)] $U\phi \in V_0$;

\item[iii)] $\bigwedge _{n\in {\bf Z}}U^{n}V_{0}=\left\{ 0\right\} $;

\item[iv)] $\bigvee _{n\in {\bf Z}}U^{n}V_{0}=L^{2}\left( {\bf R}\right) $.
\end{enumerate}

The simplest example of a scaling function is the characteristic function of
the interval $\left[ 0,1\right] $, i.e., the zeroth Haar function. By i) we may
define an isometry
\[
F_{\phi }\colon V_{0}\longrightarrow L^{2}\left( {\bf R}\right) ,\qquad \xi
\longmapsto m,
\]%
as follows. The scaling by $\nu $ on $L^{2}\left( {\bf R}\right) $ is
defined by the unitary operator $U$ given by $(U\xi )(x)=
\left( \nu +1\right) ^{-\frac{1}{2}}\xi
\left( \left( \nu +1\right) ^{-1}x\right) $
for $\xi \in L^{2}\left( {\bf R}\right) $, $x\in {\bf R}$, and
the translation as the following operator $(T\xi )(x)=\xi (x-1)$.

We consider the scaling Haar function $\varphi $ given as the sum $\varphi
(x)=h\left( 1-x\right) -h\left( -x\right) $ of Heaviside functions $h$, $h(x)=1$ for $x\geq0$ and
 $h(x)=0$ for $x<0$

Let $V_{0}$ be the linear span of $\left\{ \phi _{\nu }^{\left( k\right)
}\left( x\right) \equiv x^{\nu }\varphi \left( x-k\right) \right\} _{k\in
{\bf Z}}$.
Then $V_{0}$ is a closed subspace of $L^{2}\left( {\bf R}\right) $ with
respect to the following scalar product:
$\displaystyle \langle f\mid g\rangle =\int \overline{f\left( x\right) }
g\left( x\right) x\,dx$. 
 We have $\bigcap_{n\in
{\bf Z}}U^{n}V_{0}=\{0\}$ and $\bigvee U^{n}V_{0}=L^{2}\left( {\bf R}\right) $.

Let $\xi \in L^{2}\left( {\bf R}\right) $, and assume that $\xi
(x)=\sum_{k}b_{k}\left\{ \phi _{\nu }^{\left( k\right)
}\left( x\right)\right\}$, $b_{k}\in\textbf{C}$, $x\in\textbf{R}$

By applying the Hankel transform $H_{\nu }(\,\cdot \,,t)$ to both sides of
the above equality and using the definition of $\phi$ we get for $t\geq 0$, $x\in\textbf{R}$:
\begin{eqnarray}
H_{\nu }(\xi (x),t) &=&\sum_{k}b_{k}H_{\nu }\left( \varphi \left( x-k\right)
x^{\nu },t\right)  \label{hnu} \\
\ &=&\sum_{k}b_{k}H_{\nu }\left( h\left( k+1-x\right) x^{\nu },t\right)
-\sum_{k}b_{k}H_{\nu }\left( h\left( k-x\right) x^{\nu },t\right)  \nonumber
\\ \nonumber
\ &=&\left[ \sum_{k}b_{k}\left( k+1\right) ^{\nu +1}J_{\nu +1}\left( t\left(
k+1\right) \right) -\sum_{k}b_{k}k^{\nu +1}J_{\nu +1}\left( tk\right)
\right] \\ \nonumber
&\times &H_{0}\left( \frac{1}{x},t\right).
\end{eqnarray}
All series converge in $L^{2}\left( {\bf R}\right)$

To write the above expression in a more compact form we use the addition
formula for Bessel functions
\[
J_{n}\left( x+y\right) =\sum_{k=-\infty }^{\infty }J_{k}\left( x\right)
J_{n-k}\left( y\right) .
\]
Then we get
\begin{eqnarray}\nonumber
H_{\nu }\left( \xi \left( x\right) ,t\right)  &=&\sum_{k}b_{k}\left[ \left(
k+1\right) ^{\nu +1}J_{\nu +1}\left( kt+t\right) -k^{\nu }J_{\nu +1}\left(
kt\right) \right] H_{0}\left( \frac{1}{x},t\right)   \\ \nonumber
&=&\sum_{k}b_{k}\left[ \left( k+1\right) ^{\nu +1}\sum_{h}J_{h}\left(
tk\right) J_{\nu +1-h}\left( t\right) -k^{\nu }J_{\nu +1}\left( kt\right) %
\right] \\ \label{hnu0}
&\times &H_{0}\left( \frac{1}{x},t\right).
\end{eqnarray}%
Define
\begin{equation}
m_{0}\left( t\right) =\sum_{k}b_{k}\left[ \left( k+1\right) ^{\nu
+1}\sum_{h}J_{h}\left( tk\right) J_{\nu +1-h}\left( t\right) -k^{\nu }J_{\nu
+1}\left( kt\right) \right] .  \label{m0}
\end{equation}%
Here we consider $L^{2}\left( {\bf R},\mu \right) $ with $d\mu
(x)=x\,dx$. By using the Plancherel Theorem, and the orthogonality of the
Haar functions, we get
\begin{eqnarray}
\frac{\delta _{k,0}}{2\left( \nu +1\right) } &=&\int_{0}^{\infty }\phi _{\nu
}^{\left( k\right) }\left( x\right) \phi _{\nu }^{\left( 0\right) }\left(
x\right) x\,dx  \label{risul0} \\
&=&\int_{0}^{\infty }\left[ H_{\nu }\left( x^{\nu }\left[ h\left(
k+1-x\right) -h\left( k-x\right) \right] ,t\right) \right. \\ \nonumber
&\times &\left. H_{\nu }\left( x^{\nu }%
\left[ h\left( 1-x\right) -h\left( -x\right) \right] ,t\right) \right] t\,dt
\nonumber \\ \nonumber
&=&\sum_{j\in {\bf Z}}\int_{j}^{j+1}[ H_{\nu }( x^{\nu }[
h( k+1-x) -h( k-x) ] ,t) \\ \nonumber
&\times & H_{\nu }(
x^{\nu }[ h( 1-x) -h( -x) ] ,t)
] t\,dt.
\end{eqnarray}%
Thus the latter, upon a change of variables, can be rewritten as
\begin{equation}
\int_{0}^{1}\sum_{j\in {\bf Z}}[ H_{\nu }( x^{\nu }[ h(
k+1-x) -h( k-x) ] ,t+j) H_{\nu }( x^{\nu }
[ h( 1-x) -h( -x) ] ,t+j) ]
( t+j) \,dt.  \label{risul1}
\end{equation}%
We used the following obvious fact:
\begin{equation}
\frac{1}{2\left( \nu +1\right) }=\int_{0}^{1}\frac{dt}{2\left( \nu +1\right)
}.  \label{valuedelta}
\end{equation}%
On comparing (\ref{valuedelta}) and (\ref{risul1}) for $k=0$, we get
\[
\sum_{j\in {\bf Z}}H_{\nu }^{2}\left( x^{\nu }\left[ h\left( 1-x\right)
-h\left( -x\right) \right] ,t+j\right) -\frac{1}{2\left( \nu +1\right) }%
=0,\qquad \rm {Lebesgue \ a.e.}
\]
On the other hand, in view of (\ref{risul0}), (\ref{m0}), (\ref{hnu0}) and (%
\ref{hnu}) the left-hand side of this equality can be rewritten in terms of $%
m_{0}$ as follows
\[
\sum_{j\in {\bf Z}}\left| m_{0}\left( t+j\right) \right| ^{2}\left|
H_{0}\left( 1/z,
t+j\right) \right| ^{2}=\frac{1}{2\left( \nu +1\right) },\qquad
\rm {Lebesgue \ a.e.}
\]

To get a direct connection with representations of $O_{\nu +1}$, we need to
consider
our new
multiresolutions on the complex plane ${\bf C}$. Assume  $\phi $ to
be a step function on ${\bf C}$, defined
for $\left| z\right| \leq 1$
by
\[
\phi \left( \left| z\right| e^{i%
\mathop{\rm Arg}%
\left( z\right) }\right) =\left\{
\begin{array}{lll}
1 &  & \text{if }0\leq
\mathop{\rm Arg}%
\left( z\right) \leq \alpha , \\
0 &  & \text{otherwise,}%
\end{array}%
\right.
\]%
where $\alpha =\frac{2\pi }{m}$, for a fixed $m\in\textbf{N}$. With $k,m\in
{\bf N}$, $1\leq N\leq m$ take then $V_{0}$ to be the span of
$\left\{ \phi \left[ \left( \left| z\right| +k\right) \exp \left( i\left(
\mathop{\rm Arg}%
\left( z\right) +N\alpha \right) \right) \right] \right\} $. Let
\begin{equation}
U\xi \left( z\right) =
\left( \nu +1\right) ^{-1/2}
\xi \left( \frac{z}{\nu +1}\right) \label{scalingoperator}
\end{equation}
be the scaling operator. For $j\in\textbf{Z}$ let $V_{j}$ be the closed span in
$L^{2}\left( {\bf C},\nu \right)$  of
\[
\left\{ \phi \left[ \left( \frac{\left| z\right| }{\left( \nu +1\right) ^{j}}%
+k\right) e^{i\left(
\mathop{\rm Arg}%
\left( z\right) +N\alpha \right) }\right] \right\} _{k\in {\bf Z},\;1\leq
N\leq m},
\]%

Consider $L^{2}\left( {\bf C},\nu \right) $ where the measure $\nu \left(
z\right) =z^{\nu }\,dz$, and $dz$ denotes the planar measure on ${\bf C}$.
Assume $U\phi \in V_{0}$, i.e.,
\[
\left( U\phi \right) \left( z\right) =\sum_{k}a_{k}\phi \left[ \left( \left|
z\right| +k\right) \exp \left( i\left(
\mathop{\rm Arg}%
\left( z\right) +N\alpha \right) \right) \right] , k\in\textbf{Z}
\]

\begin{proposition}
\label{ProHan.1}With the assumptions above, 
 the properties {\rm i)--iv)} of a multiresolution are
satisfied.
\end{proposition}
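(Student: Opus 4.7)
The plan is to check properties i)--iv) in order, exploiting the fact that the proposed $\phi$ is a step function supported on a single ``polar rectangle'' $\{|z|\le 1,\ 0\le\mathop{\rm Arg}(z)\le\alpha\}$ and that the family of ``translates'' indexed by $(k,N)$ is engineered precisely to tile $\mathbf{C}$ by rotated annular sectors. Property ii) is built into the setup (the stated scaling identity $U\phi=\sum_k a_k\phi[(|z|+k)e^{i(\mathop{\rm Arg}(z)+N\alpha)}]$). Property i) reduces to a direct geometric/measure-theoretic computation, and iii)--iv) follow from standard multiresolution reasoning once one sees that $U^nV_0=V_n$ is the closed span of rotated-annular step functions at scale $(\nu+1)^n$.

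For i), the key observation is that the support of $\phi[(|z|+k)e^{i(\mathop{\rm Arg}(z)+N\alpha)}]$ is the sector $\{-k\le|z|\le 1-k,\ -N\alpha\le\mathop{\rm Arg}(z)\le(1-N)\alpha\}$. As $k$ runs over the non-positive integers and $N$ over $\{1,\dots,m\}$ (with $m\alpha=2\pi$), these sectors tile $\mathbf{C}$ up to boundary (measure-zero) overlaps, so distinct pairs $(k,N)\neq(k',N')$ give inner product zero in $L^2(\mathbf{C},\nu)$. For the diagonal terms, I would compute $\int_{-k}^{1-k}r^{\nu+1}\,dr\int_{-N\alpha}^{(1-N)\alpha}e^{i\nu\theta}\,d\theta$ using the measure $\nu(z)=z^\nu\,dz$, and absorb the resulting constants into a renormalization of $\phi$ (with a rescaling of $\alpha$ if necessary) so the family is orthonormal. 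The clean orthogonality of the supports removes the hard part of an MRA orthonormality check; only the normalization of the weighted radial integral remains.

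For iii) and iv), I would use that $U^nV_0=V_n$, where $V_n$ is spanned by the rotated annular step functions at radial scale $(\nu+1)^n$, so the chain $V_n$ is nested by the scaling relation in ii) (larger $n$ = coarser scale, and $V_{n+1}\subset V_n$ follows by expressing a scale-$(\nu+1)^{n+1}$ step function as a finite linear combination of scale-$(\nu+1)^n$ ones). For iii), any $f\in\bigcap_n V_n$ is constant on arbitrarily coarse annular sectors; combining the bound $\|f\|_\infty\le C(\nu+1)^{-n/2}\|f\|_2\to 0$ as $n\to+\infty$ with $f\in L^2(\mathbf{C},\nu)$ forces $f=0$. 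For iv), it suffices to approximate characteristic functions of bounded Borel sets in $\mathbf{C}$ in $L^2$-norm by linear combinations of finer and finer rotated annular step functions; as $n\to-\infty$ the mesh of the sector partition shrinks to zero, and the standard approximation-by-step-functions argument (e.g., via the Lebesgue differentiation theorem against the weight $z^\nu$) gives density.

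I expect the main obstacle to lie in property i): the measure $\nu(z)=z^\nu\,dz$ is complex-valued as written, so one must either interpret it as $|z|^\nu\,dz$ (the natural positive weight compatible with the Hankel framework of Section~\ref{HAN}) or verify that the complex phase $e^{i\nu\theta}$ integrates to zero or to a benign constant over each sector $[-N\alpha,(1-N)\alpha]$; the latter is delicate and in general requires $\nu\alpha$ to be a rational multiple of $2\pi$, which ties the allowed sector count $m$ to the Bessel parameter $\nu$. Getting this compatibility right is what ultimately forces the choice $N=\nu+1$ as an admissible scaling, and is the step where the geometric construction on $\mathbf{C}$ must be aligned with the algebraic structure of $O_{\nu+1}$ used later in the paper.
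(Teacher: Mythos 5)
Your argument for i)--iv) follows essentially the same route as the paper's (very terse) proof: orthogonality from the pairwise disjoint supports of the rotated annular sectors, ii) from the standing assumption $U\phi\in V_0$, iii) by the standard scaling/coarsening argument, and iv) from the density of step functions in $L^2(\mathbf{C},\nu)$ --- you merely supply more detail, and your caveat about the complex weight $z^\nu\,dz$ (the paper's own inner-product computation yields the complex, non-unit value $\frac{1}{\nu+2}\,\frac{e^{i\alpha(\nu+1)}-1}{i(\nu+1)}$ on the diagonal, so a renormalization really is needed) flags a genuine gap that the paper glosses over. Be aware that most of the paper's proof environment is actually spent deriving the filter identity (\ref{eqast1}) for $m_0$ in preparation for Theorem \ref{Thm1}; that material is not part of properties i)--iv) and its absence from your proposal is not a defect.
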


\begin{proof}
i) follows from the fact that the $\phi $'s have disjoint support
on $L^{2}\left( {\bf C},\nu \right) $. ii) holds
for Haar functions
and iii)
follows from i). By the density of step functions in $L^{2}\left( {\bf C}%
,\nu \right) $ also iv) follows.

If $\xi \in V_{-j}$ and $\nu\neq {-1}$, then $U^{j}\xi \in V_{0}$. Since
\[
\phi \in V_{0}\subset V_{-1}\text{\quad and\quad }\left\{ \phi \left[ \left(
\frac{\left| z\right| }{\nu +1}+k\right) \exp \left( i\left(
\mathop{\rm Arg}%
\left( z\right) +N\alpha \right) \right) \right] \right\}
\]%
are orthonormal in $V_{-1}$, we have
\[
\phi (z)=\sum_{k}a_{k}\phi \left[ \left( \frac{\left| z\right| }{\nu +1}%
+k\right) \exp \left( i\left(
\mathop{\rm Arg}%
\left( z\right) +N\alpha \right) \right) \right] , z\in \textbf {C},
\]%
so by applying the Hankel transform of order $\nu $ , we get
\[
H_{\nu }\left( \phi \left[ \left( 
\frac{\left| z\right| }{\nu +1}+k\right) \exp \left( i\left(
\mathop{\rm Arg}%
\left( z\right) +N\alpha \right) \right) \right] ;t\right) =m_{0}\left(
t\right) H_{0}\left( \frac{1}{z};t\right) . 
\]%
Using the orthogonality of $\phi \left[ \left( 
\left| z\right| +k\right) 
\exp \left( i\left( 
\mathop{\rm Arg}%
\left( z\right) +N\alpha \right) \right) \right] _{k\in {\bf Z}}$ in $L^{2}\left( {\bf C}%
,\nu \right) $we have
\begin{eqnarray}
\langle \phi ^{\left( k,N\right) }\mid\phi ^{\left( 0,0\right) }
\rangle &\equiv &\int \mkern-9mu\int_{{\bf C}}\phi ^{\left( k,N\right)
}\left( z\right) \overline{\phi ^{\left( 0,0\right) }\left( z\right) }%
z\,d\mu \left( z\right)  \label{inner-prod} \\
\ &=&\int_{0}^{\infty }\int_{0}^{2\pi }\phi \left[ \left( \left| z\right|
+k\right) \exp \left( i\left(
\mathop{\rm Arg}%
\left( z\right) +N\alpha \right) \right) \right] \nonumber \\
&\times &\overline{\phi \left[
\left| z\right| \exp \left( i%
\mathop{\rm Arg}%
\left( z\right) \right) \right] }  \nonumber \\
&\times &\left| z\right| ^{\nu +1}\exp \left( i%
\mathop{\rm Arg}%
\left( z\right) \left( \nu +1\right) \right) \,d\left| z\right| \,d%
\mathop{\rm Arg}%
\left( z\right)  \nonumber \\
\ &=&\int_{0}^{1}\left| z\right| ^{\nu +1}\delta _{k,0}\,d\left| z\right|
\int_{0}^{\alpha }\exp \left( i%
\mathop{\rm Arg}%
\left( z\right) \left( \nu +1\right) \right) \nonumber \\
&\times &\delta _{N,0}\,d%
\mathop{\rm Arg}%
\left( z\right)  \nonumber \\
\ &=&\frac{1}{\nu +2}\delta _{k,0}\frac{e^{i\alpha \left( \nu +1\right) }-1}{%
i\left( \nu +1\right) }\delta _{N,0}.  \nonumber
\end{eqnarray}%
By the Plancherel theorem, we then have
\[
\frac{1}{\nu +2}\delta _{k,0}\frac{e^{i\alpha \left( \nu +1\right) }-1}{%
i\left( \nu +1\right) }\delta _{N,0}=\int \mkern-9mu\int_{{\bf C}}H_{\nu
}\left( \phi ^{\left( k,N\right) }\left( z\right) ;t\right) \overline{H_{\nu
}\left( \phi ^{\left( 0,0\right) }\left( z\right) ;t\right) }t\,d\nu \left(
t\right) .
\]%
The left-hand side can then be rewritten as
\[
\int_{0}^{1}\int_{0}^{\alpha }H_{\nu }\left( \phi ^{\left( k,N\right)
}\left( z\right) ;t\right) \overline{H_{\nu }\left( \phi ^{\left( 0,0\right)
}\left( z\right) ;t\right) }t^{\nu +1}\,dt\,d%
\mathop{\rm Arg}%
\left( t\right) .
\]%
Upon a change of variable letting $\theta =%
\mathop{\rm Arg}%
\left( t+2\pi j\right) $, the latter equals
\[
\int_{0}^{1}\left| t\right| ^{\nu +1}\,d\left| t\right| \int_{0}^{\alpha
}e^{i2\pi \left( \nu +1\right) }\sum_{j}H_{\nu }\left( \phi ^{\left(
k,N\right) }\left( z\right) ;\left| t\right| e^{i\theta }\right) \overline{%
H_{\nu }\left( \phi ^{\left( 0,0\right) }\left( z\right) ;\left| t\right|
e^{i\theta }\right) }\,d\theta .
\]%
Comparing the previous two formulae for $k=N=0$ we get
\[
\sum_{j}\left| H_{\nu }\left( \phi ^{\left( 0,0\right) };\left| t\right|
e^{i\theta }\right) \right| ^{2}-\frac{1}{\nu +2}\frac{e^{i\alpha \left( \nu
+1\right) }-1}{i\left( \nu +1\right) }=0\qquad \rm {for \ Lebesgue \ a.e.}
\]%
Rewriting the above in terms of $m_{0}$ we have
\[
\sum_{j}\left| m_{0}\left( te^{2\pi ij}\right) \right| ^{2}\left|
H_{0}\left( \frac{1}{z};\left| t\right| e^{i\theta }\right)
\right| ^{2}=\frac{1}{\nu +2}\frac{e^{i\alpha \left( \nu +1\right) }-1}{%
i\left( \nu +1\right) },
\]%
since
\begin{eqnarray*}
&&\int_{0}^{1}\left| t\right| ^{\nu +1}\,d\left| t\right| \int_{0}^{\alpha
}e^{i2\pi \left( \nu +1\right) }\sum_{j}\left| m_{0}\left( te^{2\pi
ij}\right) \right| ^{2}\left| H_{0}\left( \frac{1}{z};\left|
t\right| e^{i\theta }\right) \right| ^{2}\,d\theta \\
&&\qquad =\int_{0}^{1}\left| t\right| ^{\nu +1}\,d\left| t\right|
\int_{0}^{\alpha }e^{i2\pi \left( \nu +1\right) }\sum_{j}\left|
m_{0}\left( te^{2\pi ij}\right) \right| ^{2}\frac{1}{\left| t\right| ^{2}}%
\,d\theta \\
&&\qquad =\int_{0}^{1}\left| t\right| ^{\nu -1}\,d\left| t\right|
\int_{0}^{\alpha }e^{i2\pi \left( \nu +1\right) }\sum_{j}\left|
m_{0}\left( te^{2\pi ij}\right) \right| ^{2}\,d\theta .
\end{eqnarray*}%
{}From (\ref{inner-prod})\label{in} we get
\begin{eqnarray*}
\int_{0}^{1}\left| t\right| ^{\nu -1}\,d\left| t\right| \int_{0}^{\alpha
}e^{i2\pi \left( \nu +1\right) }\sum_{j}\left| m_{0}\left( te^{2\pi
ij}\right) \right| ^{2}\,d\theta &=&\frac{\delta _{k,0}}{\nu +2}\frac{%
e^{i\alpha \left( \nu +1\right) }-1}{i\left( \nu +1\right) }\delta _{N,0} \\
&=&\frac{\nu }{\nu +2}\int_{0}^{1}\left| t\right| ^{\nu -1}\,d\left|
t\right| \\
&\times &\int_{0}^{\alpha }e^{i\theta \left( \nu +1\right) }\,d\theta .
\end{eqnarray*}%
Thus
\begin{equation}
\left( \frac{1}{\nu +1}\right)
\sum_{j}\left| m_{0}\left( te^{2\pi ij/\left( \nu +1\right) }\right) \right|
^{2}=\frac{\nu }{\nu +2}.  \label{eqast1}
\end{equation}%
Set $c=\frac{\nu }{\nu +2}$; then
$\left( \frac{1}{c\left( \nu +1\right) }\right)
\sum_{j}\left| m_{0}\left( te^{2\pi ij/\left( \nu +1\right) }\right) \right|
^{2}=1$.
\end{proof}

 In fact,
as in Ref.\ \CITE{dau}, Thm.~5.1.1, we have proved a part of the following result.

\begin{theorem}
\label{Thm1}If the ladder of the closed subspaces
$\left\{ V_{j}\right\}_{j\in {\bf Z}}$ in $L^{2}\left( {\bf C},\nu\right) $ 
satisfies properties {\rm i)--iv),} then there exists
an associated orthonormal wavelet basis $\left\{ \psi _{jk}:j,k\in {\bf Z}\right\} $ for
$L^{2}\left( {\bf C},\nu\right) $  such that
\[
\left( U\phi \right) \left( z\right) =\sum_{k}a_{k}\phi \left[ \left( \left|
z\right| +k\right) \exp \left( i\left(
\mathop{\rm Arg}%
\left( z\right) +N\alpha \right) \right) \right]
\]%
holds. One possibility for construction of the wavelet corresponding to
$\phi $ is that
\[
H_{\nu }\left( \phi \left[ \left( \left| z\right| +k\right)
\exp \left( i\left(
\mathop{\rm Arg}%
\left( z\right) +N\alpha \right) \right) \right] ;
\left( \nu +1\right)
t\right) =m_{0}\left(
t\right) H_{0}\left( \frac{1}{z};
t\right)
\]%
be satisfied.
\end{theorem}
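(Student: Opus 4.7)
The plan is to mirror the classical Daubechies construction (\cite{dau}, Thm.~5.1.1) transplanted into the Hankel-transform framework developed in Proposition~\ref{ProHan.1}. First I would extract the refinement equation: by property ii) we have $U\phi\in V_{0}$, and since $V_{0}$ is by definition the closed linear span of the family $\phi[(|z|+k)\exp(i(\mathrm{Arg}(z)+N\alpha))]$, there exist scalars $\{a_{k}\}$ for which the scaling identity in the theorem holds, with convergence in $L^{2}(\mathbf{C},\nu)$. Applying the Hankel transform of order $\nu$ to both sides and invoking the interaction of $H_{\nu}$ with $U$ (rescaling the argument by $\nu+1$), the very computation used in Proposition~\ref{ProHan.1} assembles the coefficients into the symbol $m_{0}(t)$ of (\ref{m0}) and extracts the common angular/radial factor as $H_{0}(1/z;t)$. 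This is precisely the displayed identity $H_{\nu}(\phi[\ldots];(\nu+1)t)=m_{0}(t)H_{0}(1/z;t)$ claimed in the theorem.

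Next I would construct the wavelets. Let $W_{0}:=V_{-1}\ominus V_{0}$; properties iii) and iv) then yield the orthogonal decomposition $L^{2}(\mathbf{C},\nu)=\bigoplus_{j\in\mathbf{Z}}U^{j}W_{0}$. To exhibit an orthonormal basis of $W_{0}$, extend $m_{0}$ to measurable companions $m_{1},\ldots,m_{\nu}$ so that the matrix $\bigl(m_{j}(te^{2\pi ik/(\nu+1)})\bigr)_{j,k=0}^{\nu}$ is, up to the scalar $c(\nu+1)$, pointwise unitary. The crucial input is the scalar relation (\ref{eqast1}) proved in Proposition~\ref{ProHan.1}, which says precisely that the first row of this candidate matrix has the correct constant norm almost everywhere; a measurable Gram--Schmidt selection of the remaining rows, in the spirit of \cite{BEJ,Br-Jo2}, supplies $m_{1},\ldots,m_{\nu}$. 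The mother wavelets are then defined by $H_{\nu}(\psi_{l};(\nu+1)t)=m_{l}(t)H_{0}(1/z;t)$ for $l=1,\ldots,\nu$, and the basis $\{\psi_{jk}\}$ is their scaling/translation orbit under $U$ and $T$.

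Orthonormality of $\{\psi_{jk}\}$ and the fact that each $\psi_{jk}$ lies in the appropriate $W_{-j}$ follow by Plancherel for the Hankel transform together with the pointwise unitarity of the polyphase matrix, replaying the inner-product computation (\ref{inner-prod}) with $\phi$ replaced by a wavelet; completeness in $L^{2}(\mathbf{C},\nu)$ is then immediate from the orthogonal decomposition. The main obstacle is this measurable extension: one must select $m_{1},\ldots,m_{\nu}$ as Lebesgue measurable functions of $t$ so that the full $(\nu+1)\times(\nu+1)$ matrix is pointwise unitary, while simultaneously respecting the weighted measure $\nu(z)=z^{\nu}\,dz$ and the singularity of $H_{0}(1/z;t)$ at the origin. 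In the classical Fourier setting this step is routine, but here one must additionally verify that the companion symbols lie in the correct $L^{\infty}$ class on the torus-like quotient underlying the Bessel analogue of Fourier series, so that the intertwining $WS_{0}=UW$ from (\ref{eqpound.7}) carries over to the Hankel setting and identifies the $\psi_{l}$'s with the Cuntz isometries $S_{l}$ for $O_{\nu+1}$.
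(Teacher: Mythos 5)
Your proposal follows essentially the same route as the paper's completion of proof: both use the periodicity of the Bessel functions together with (\ref{eqast1}) to obtain the constant-norm condition on the row $\bigl(m_{0}(te^{2\pi ij/(\nu+1)})\bigr)_{j=0}^{\nu}$, both invoke the measurable completion to a pointwise unitary matrix from Corollary 4.2 of Ref.\ \cite{Br-Jo2} to produce $m_{1},\dots,m_{\nu}$, and both define the wavelets by the Hankel-transform relation and deduce orthonormality from unitarity plus Plancherel. The only cosmetic difference is that you spell out the $V_{-1}\ominus V_{0}$ decomposition and flag the measurable-selection issue explicitly, while the paper instead devotes the remainder of its argument to writing down the resulting Cuntz-algebra representation.
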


\begin{completionofproof}.
We observe that the Bessel functions have a ``multiplicative periodicity'' on
the unit circle in the following sense:
\[
J_{\nu }\left( ze^{\pi ik}\right) =e^{\pi ik\nu }J_{\nu }\left( z\right)
\]

{}From the above \textup{(\ref{eqast1}),} this implies that
\[
c^{-1}\sum_{j=0}^{\nu }\left| m_{0}\left( ze^{2\pi ij/\left( \nu +1\right)
}\right) \right| ^{2}=\left( \nu +1\right) .
\]

Given $m_{0}$ satisfying \textup{(\ref{eqast1})}
there exists $\left\{ m_{i},\;i=1,\dots,\nu
\right\} $ from Corollary 4.2 of Ref.\ \CITE{Br-Jo2} such that
\[
\sum_{j=0}^{\nu}
c^{-1}
\overline{m_{k}\left( z\exp \left( 2\pi ij/\left( \nu +1\right) \right)
\right) }m_{k^{\prime }}\left( z\exp \left( 2\pi ij/\left( \nu +1\right)
\right) \right) =\delta _{kk^{\prime }}
\left( \nu +1\right) .
\]
Thus, reformulating
the orthogonality conditions in $L^{2}\left( {\bf C},\nu\right) $,
we get that the following matrix,
\[
M\left( z\right) =\frac{1}{\sqrt{c\left( \nu +1\right) }}\left(
\begin{array}{cccc}
m_{0}\left( \sigma _{0}\left( z\right) \right) &
m_{0}\left( \sigma _{1}\left( z\right) \right) & \dots &
m_{0}\left( \sigma _{\nu }\left( z\right) \right) \\
m_{1}\left( \sigma _{0}\left( z\right) \right) &
m_{1}\left( \sigma _{1}\left( z\right) \right) & \dots &
m_{1}\left( \sigma _{\nu }\left( z\right) \right) \\
\vdots & \vdots & \ddots & \vdots \\
m_{\nu }\left( \sigma _{0}\left( z\right) \right) & m_{\nu
}\left( \sigma _{1}\left( z\right) \right) & \dots & m_{\nu
}\left( \sigma _{\nu }\left( z\right) \right)%
\end{array}
\right) ,
\]
is unitary for Lebesgue almost all $z\in {\bf C}$.

Let $O_{\nu +1}$ be the $C^{\ast }$-algebra generated by $\nu +1$
isometries $S_{0},S_{1},\dots ,S_{\nu }$, $\nu\in \textbf{N }$ satisfying:
\[
S_{i}^{\ast }S_{j} =\delta _{i,j}1 ,\qquad
\sum_{i=0}^{\nu }S_{i}S_{i}^{\ast } =1.
\]

The representations we consider are
now
realized on the Hilbert space $%
H=L^{2}\left( {\bf C},\nu \right) $ where the measure
$\nu $
is given by $d\nu
\left( z\right) =z^{\nu }\,dz$.

As in Ref.\ \CITE{BJ}
the representation of the Cuntz algebra is defined in terms
of certain maps
\[
\sigma _{i}\colon \Omega \longrightarrow \Omega ,
\]%

such that $\mu \left( \sigma _{i}\left( \Omega \right) \cap \sigma
_{j}\left( \Omega \right) \right) =0$ for $i\neq j$, as in (\ref{eqCunpound.3}),
 and of measurable
functions $m_{0},\dots ,m_{\nu }\colon {\bf C\longrightarrow C}$. Also we have, for $L^{2}\left( {\bf C},\nu
\right) $:
\begin{equation}
\int_{{\bf C}}f\left( z\right) \,d\nu \left( z\right) =\sum_{r\in {\bf Z}%
_{\nu +1}}\rho _{r}\int_{{\bf C}}f\left( \sigma _{r}\left( z\right) \right)
\,d\nu \left( z\right) ,  \label{nuova}
\end{equation}%
where $\left\{ \rho _{r}\right\} $ is a (finite) probability distribution on
the cyclic group ${\bf Z}_{\nu +1}$.

The representations take the following form on $L^{2}\left( {\bf C},\nu
\right) $%
\[
\left( S_{k}\xi \right) \left( z\right) =m_{k}\left( z\right) \xi \left(
z^{\nu +1}\right) , \xi\in L^{2}\left( {\bf C},\nu
\right) :
\]%
where the functions $m_{k}$ are obtained from the above multiresolution
construction. It is easy to verify that $S_{k}$ is a representation of $O_{\nu
+1} $, $z\in\textbf{C}$ and that
\[
\left( S_{k}^{\ast }\xi \right) \left( z\right) =\sum_{r\in {\bf Z}_{\nu
+1}}
c^{-1}
\rho _{r}\overline{m_{k}\left( \sigma _{r}\left( z\right) \right) }\xi
\left( \sigma _{r}\left( z\right) \right) .
\]

In fact we have
\begin{eqnarray*}
\left( S_{k}^{\ast }S_{k^{\prime }}\xi \right) \left( z\right) &=&\sum_{r\in
{\bf Z}_{\nu +1}}
c^{-1}
\rho _{r}\overline{m_{k}\left( \sigma _{r}\left( z\right)
\right) }m_{k^{\prime }}\left( \sigma _{r}\left( z\right)
\right) \xi \left( \sigma \sigma _{r}\left( z\right) \right)
\\
&=&\delta _{k,k^{\prime }}\xi \left( z\right) ,
\end{eqnarray*}%
by the unitarity of the matrix $M\left( z\right) $. Similarly we may verify
that
\[
\sum_{k\in {\bf Z}_{\nu +1}}\left( S_{k}S_{k}^{\ast }\xi \right) \left(
z\right) =\xi \left( z\right), \xi\in L^{2}\left( {\bf C},\nu
\right) 
\]%
As a result, we then have indeed a representation of $O_{\nu +1}$.
\end{completionofproof}

\section{\label{KEW}A $\kew$-parametric construction of $\emm_{0}$}

Let us now turn to a $q$-parametric construction of $m_{0}$. We start by
giving a $q$-extension of the Hankel Fourier-Bessel integral. We use the
orthogonality relations from the following result (Theorem 3.10, p.~35 of %
Ref.\ \CITE{RS}) \rm{and} \CITE{Ga-Ra}.

\begin{theorem}
For $x\in\textbf{C}$ and $\left| x\right| <q^{-\frac{1}{2}}$, $n,m\in {\bf Z}$, $0<q<1$,we have
\begin{eqnarray*}
\delta _{m,n} &=&\sum_{k=-\infty }^{\infty }x^{k+n}q^{\frac{1}{2}\left(
k+n\right) }\frac{\left( x^{2}q;q\right) _{\infty }}{\left( q;q\right)
_{\infty }}\Phi _{1,1}\left( \left.
\begin{array}{c}
0 \\
x^{2}q%
\end{array}%
\right| q,q^{n+k+1}\right) \\
&&\qquad \times x^{k+m}q^{\frac{1}{2}\left( k+m\right) }\frac{\left(
x^{2}q;q\right) _{\infty }}{\left( q;q\right) _{\infty }}\Phi _{1,1}\left(
\left.
\begin{array}{c}
0 \\
x^{2}q%
\end{array}%
\right| q,q^{m+k+1}\right)
\end{eqnarray*}%
where the sum is absolutely convergent, uniformly on
compact subsets of the open disk $\left| x\right| <q^{-1/2}$.
\end{theorem}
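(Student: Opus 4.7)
The plan is to reduce the assertion to the established Hahn-Exton $\kew$-Bessel orthogonality of Koornwinder-Swarttouw, namely Thm.~3.10 of Ref.~\CITE{RS} together with identities from \CITE{Ga-Ra}. The key observation is that each summand is, up to the explicit prefactors $\kew^{(n+k)/2}\kew^{(m+k)/2}$ and $(x^{2}\kew;\kew)_\infty^{2}/(\kew;\kew)_\infty^{2}$, a product of two Hahn-Exton $\kew$-Bessel functions $J_{\nu}^{(3)}(\,\cdot\,;\kew)$ evaluated on a $\kew$-geometric lattice, with the analytic relation $\kew^{\nu}=x^{2}$ linking the free parameter $x$ to the order~$\nu$.

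Concretely, I would first set
$$\Psi_{j}(x):=x^{j}\kew^{j/2}\,\frac{(x^{2}\kew;\kew)_\infty}{(\kew;\kew)_\infty}\,\Phi_{1,1}\!\left(\left.\begin{array}{c}0\\ x^{2}\kew\end{array}\right|\kew,\kew^{j+1}\right),$$
and verify by direct expansion of the ${}_{1}\phi_{1}$ series and collection of Pochhammer symbols the identification
$$\Psi_{j}(x)=\kew^{j/2}\,J_{\nu}^{(3)}(\kew^{j/2};\kew),\qquad \kew^{\nu}=x^{2}.$$
The claim then reads $\sum_{k\in{\bf Z}}\Psi_{n+k}(x)\Psi_{m+k}(x)=\delta_{m,n}$, and under the change of index $\ell=k+n$ it becomes the shift-invariant orthogonality
$$\sum_{\ell\in{\bf Z}}\kew^{\ell}\,J_{\nu}^{(3)}(\kew^{\ell/2};\kew)\,J_{\nu}^{(3)}(\kew^{(\ell+m-n)/2};\kew)=\kew^{-n}\delta_{m,n},$$
which is precisely the content of Thm.~3.10 of \CITE{RS} on the $\kew$-lattice, with complex $\nu$ admitted by analytic continuation in $x$.

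The hard part will be the convergence analysis, i.e.\ establishing absolute convergence uniformly on compacta in $\{|x|<\kew^{-1/2}\}$. The $k\to+\infty$ tail is benign: $\Phi_{1,1}(0;x^{2}\kew;\kew,\kew^{j+1})\to 1$ exponentially in $j$ and the prefactor $x^{j}\kew^{j/2}$ decays geometrically in exactly the regime $|x|<\kew^{-1/2}$, giving a summable majorant. The $k\to-\infty$ tail is the obstacle, since the argument $\kew^{j+1}$ there diverges; one must invoke a Heine/$\kew$-Kummer transformation to reexpress $\Phi_{1,1}$ as a series in a parameter tending to zero, after which the excess Pochhammer factors combine cleanly with $(x^{2}\kew;\kew)_\infty/(\kew;\kew)_\infty$ to produce a summable bound. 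Verifying that this transformation is legitimate for complex $x$ in the punctured disk, and that the resulting dominating series is uniform on compacta, is the principal technical step and is where the bulk of the work in \CITE{RS} is concentrated.
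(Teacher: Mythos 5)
The paper offers no proof of this statement at all: it is quoted verbatim as Theorem 3.10, p.~35 of Swarttouw's thesis (Ref.~\cite{RS}), and your proposal --- unwinding the $\Phi_{1,1}$ expressions into Hahn--Exton $q$-Bessel functions via $q^{\nu}=x^{2}$ and then invoking that same theorem for both the lattice orthogonality and the convergence analysis --- is essentially the identical appeal to the literature. (One bookkeeping slip: after the shift $\ell=k+n$ the right-hand side should remain $\delta_{m,n}$ rather than $q^{-n}\delta_{m,n}$, since the prefactors $q^{(n+k)/2}q^{(m+k)/2}$ contribute $q^{\ell+(m-n)/2}$ and the extra factor is $1$ on the diagonal.)
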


We prove that the orthogonality relation of the
above theorem is a $q$-analogue of the Hankel
Fourier-Bessel integral (\ref{eA}).
To simplify notations, we replace $q$ by $q^{2}$ and $x$ by $q^{\alpha }$. For $%
\mathop{\rm Re}%
\left( \alpha \right) >-1$ this gives
\begin{eqnarray}
\delta _{m,n} &=&\sum_{k=-\infty }^{\infty }q^{\left( \alpha +1\right)
\left( k+n\right) }\frac{\left( q^{2\alpha +2};q^{2}\right) _{\infty }}{%
\left( q^{2};q^{2}\right) _{\infty }}\Phi _{1,1}\left( \left.
\begin{array}{c}
0 \\
q^{2\alpha +2}%
\end{array}%
\right| q^{2},q^{2n+2k+2}\right)   \label{e1.3} \\
&&\qquad \times q^{\left( \alpha +1\right) \left( k+n\right) }\frac{\left(
q^{2\alpha +2};q^{2}\right) _{\infty }}{\left( q^{2};q^{2}\right) _{\infty }}%
\Phi _{1,1}\left( \left.
\begin{array}{c}
0 \\
q^{2\alpha +2}%
\end{array}%
\right| q^{2},q^{2n+2k+2}\right) .  \nonumber
\end{eqnarray}%
Now rewrite (\ref{e1.3}) as the transform pair
\begin{eqnarray*}
g\left( q^{n}\right)  &=&\sum_{k=-\infty }^{\infty }q^{\left( \alpha
+1\right) \left( k+n\right) }\frac{\left( q^{2\alpha +2};q^{2}\right)
_{\infty }}{\left( q^{2};q^{2}\right) _{\infty }}\\
&\times &\Phi _{1,1}\left( \left.
\begin{array}{c}
0 \\
q^{2\alpha +2}%
\end{array}%
\right| q^{2},q^{2n+2k+2}\right) f\left( q^{k}\right) , \\
f\left( q^{k}\right)  &=&\sum_{k=-\infty }^{\infty }q^{\left( \alpha
+1\right) \left( k+n\right) }\frac{\left( q^{2\alpha +2};q^{2}\right)
_{\infty }}{\left( q^{2};q^{2}\right) _{\infty }}\\
& \times &\Phi _{1,1}\left( \left.
\begin{array}{c}
0 \\
q^{2\alpha +2}%
\end{array}%
\right| q^{2},q^{2n+2k+2}\right) g\left( q^{n}\right) ,
\end{eqnarray*}%
where $f,g$ are $L^{2}$-functions on the set $\left\{ q^{k}:k\in {\bf Z}%
\right\} $ with respect to the counting measure. Insert in the above formulae $J_{\alpha
}\left( x;q\right) $,
i.e., the $q$-Bessel function given by
\begin{eqnarray*}
J_{\alpha }\left( x;q\right)  &=&\frac{\left( q^{\alpha +1};q\right)
_{\infty }}{\left( q;q\right) _{\infty }}x^{\alpha }\sum_{k=0}^{\infty }%
\frac{\left( -1\right) ^{k}q%
{k+1 \choose 2}%
x^{2k}}{\left( q^{\alpha +1};q\right) _{k}\left( q;q\right) _{k}} \\
\  &=&\frac{\left( q^{\alpha +1};q\right) _{\infty }}{\left( q;q\right)
_{\infty }}x^{\alpha }\Phi _{1,1}\left( \left.
\begin{array}{c}
0 \\
q^{\alpha +1}%
\end{array}%
\right| q,x^{2}q\right), x\in \textbf{R\textit{}}
\end{eqnarray*}%
instead of $\frac{\left( q^{\alpha +1};q\right) _{\infty }}{\left( q;q\right)
_{\infty }}x^{\alpha }\Phi _{1,1}$ and replace $f\left( q^{k}\right) $ and $g\left( q^{n}\right)$ respectively
by $q^{k}f\left( q^{k}\right)$ and $q^{n}g\left( q^{n}\right)$.
This implies that $xf\left( x\right) $ and $xg\left( x\right) $ have to be $%
L^{2}$-functions respect to the $d_q$ measure on the set $\left\{ q^{k}:k\in {\bf Z}\right\} $, see $\CITE{Ga-Ra}$.
Hence we
have
\begin{eqnarray}
g\left( q^{n}\right)  &=&\sum_{k=-\infty }^{\infty }q^{2k}J_{\alpha }\left(
q^{k+n};q^{2}\right) f\left( q^{k}\right) ,  \label{e1.4} \\
f\left( q^{k}\right)  &=&\sum_{n=-\infty }^{\infty }q^{2n}J_{\alpha }\left(
q^{k+n};q^{2}\right) g\left( q^{n}\right) ,  \nonumber
\end{eqnarray}%
and the result follows.

\begin{remark}
When $q\longrightarrow 1$ with the condition
\[
\frac{\log \left( 1-q\right) }{\log q}\in 2{\bf Z},
\]%
we can replace $q^{k}$ and $q^{n}$ in {\rm (\ref{e1.4})} by $\left(
1-q\right) ^{\frac{1}{2}}q^{k}$ and $\left( 1-q\right) ^{\frac{1}{2}}q^{n}$
respectively. By using the following $q$-integral notation, {\rm {\CITE{Ga-Ra}, \ \cite{RS}}}
\begin{equation}
\int_{0}^{\infty }f\left( t\right) \,d_{q}t=\left( 1-q\right)
\sum_{k=-\infty }^{\infty }f\left( q^{k}\right) q^{k},  \label{q-measure}
\end{equation}%
then {\rm (\ref{e1.4})} takes the form
\begin{eqnarray*}
g\left( \lambda \right)  &=&\int_{0}^{\infty }f\left( x\right) J_{\alpha
}\left( \left( 1-q\right) \lambda x;q^{2}\right) x\,d_{q}\left( x\right) ,\\
f\left( x\right)  &=&\int_{0}^{\infty }g\left( \lambda \right) J_{\alpha
}\left( \left( 1-q\right) \lambda x;q^{2}\right) \lambda \,d_{q}\left(
\lambda \right) ,
\end{eqnarray*}%
where $\lambda $ in the first identity, and $x$ in the second identity, take
the values $q^{n}$, $n\in {\bf Z}$. For $q\longrightarrow 1$ we
therefore
obtain, at
least formally, the Hankel transform pair
\begin{eqnarray*}
g\left( \lambda \right)  &=&\int_{0}^{\infty }f\left( x\right) J_{\alpha
}\left( \lambda x\right) x\,dx, \\
f\left( x\right)  &=&\int_{0}^{\infty }g\left( \lambda \right) J_{\alpha
}\left( \lambda x\right) \lambda \,d\lambda .
\end{eqnarray*}
\end{remark}

We construct a $q$-analogue of a multiresolution via $q$-Hankel transforms.
To achieve that, let us proceed as we did in the previous section; but now we
replace the Hankel transform by the deformed one using a $q$-measure. Let us
consider as before the space $L^{2}\left( {\bf C}\right) $, but with the
measure $d\nu \left( z\right) $ replaced by the $q$-measure $d\nu_{q}\left(
z\right) $, i.e., $d\mu _{q}\left( z\right) =z^{\nu }\,d_{q}\left( z\right)
$, see \cite{RS},\cite{Ga-Ra}. Assume $\phi $ to be the function on ${\bf C}$ defined
for $\left| z\right| \leq 1$
by
\[
\phi \left( \left| z\right| e^{i%
\mathop{\rm Arg}%
\left( z\right) }\right) =\left\{
\begin{array}{lll}
1 &  & \text{if }0\leq
\mathop{\rm Arg}%
\left( z\right) \leq \alpha , \\
0 &  & \text{otherwise,}%
\end{array}%
\right.
\]%
where $\alpha =\frac{2\pi }{m}$, $m\in\textbf{N}$. Take then $V_{0}$ to be the
closed
span in $L^{2}\left( {\bf C},\nu _{q}\left( z\right)
\right) $ of $%
\left\{ \phi \left[ \left( \left| z\right| +k\right) \exp \left( i\left(
\mathop{\rm Arg}%
\left( z\right) +N\alpha \right) \right) \right] \right\} $, with $k,m\in
{\bf Z}$, $1\leq N\leq m$. Let
$U$
be the scaling operator (\ref{scalingoperator}). Let
\[
V_{j}=%
\mathop{\rm span}%
\left\{ \phi \left[ \left( \frac{\left| z\right| }{\left( \nu +1\right) ^{j}}%
+k\right) e^{i\left(
\mathop{\rm Arg}%
\left( z\right) +N\alpha \right) }\right] \right\} _{j\in {\bf Z},\;1\leq
N\leq m}.
\]

Let $\xi $ be a function on $L^{2}\left( {\bf C},\nu _{q}\left( z\right)
\right) $ given by
\[
\xi \left( z\right) =\sum_{k}a_{k}\left\{ \phi \left[ \left( \left| z\right|
+k\right) \exp \left( i\left(
\mathop{\rm Arg}%
\left( z\right) +N\alpha \right) \right) \right] \right\} .
\]

Assume $U\phi \in V_{0}$, i.e.,
\[
\left( U\phi \right) \left( z\right) =\sum_{k}a_{k}\phi \left[ \left( \left|
z\right| +k\right) \exp \left( i\left(
\mathop{\rm Arg}%
\left( z\right) +N\alpha \right) \right) \right] .
\]

\begin{proposition}
\label{ProKew.5}With the assumptions above, we conclude
that the properties {\rm i)--iv)} of a multiresolution are
satisfied.
\end{proposition}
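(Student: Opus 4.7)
The plan is to mirror the proof of Proposition \ref{ProHan.1}, with the classical planar measure $d\nu(z) = z^\nu\,dz$ replaced by the $q$-measure $d\nu_q(z) = z^\nu\,d_q(z)$ throughout, and to verify each of the four multiresolution axioms in the new setting. The ladder $\{V_j\}$ uses the same translated and dilated copies of $\phi$ as in Proposition \ref{ProHan.1}; only the inner product with respect to which orthogonality and density are tested changes, so most of the structural work carries over verbatim, with the main technical input being the Jackson integral (\ref{q-measure}) in place of the Lebesgue planar integral.

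First I would verify (i), the orthonormality of the family $\{\phi^{(k,N)}\}$ in $L^2({\bf C},\nu_q)$. Since these functions are indicator-type step functions supported on pairwise essentially disjoint annular sectors, the vanishing of off-diagonal inner products is independent of the measure placed on the supports. The diagonal inner product factors into a radial $q$-integral $\int_0^1 |z|^{\nu+1}\,d_q|z|$ and an angular integral $\int_0^\alpha e^{i(\nu+1)\text{Arg}(z)}\,d\,\text{Arg}(z)$, both explicitly computable from the definition of $d_q$, and the resulting nonzero constant can be absorbed into a normalization of $\phi$. Property (ii) is the standing hypothesis $U\phi \in V_0$ imposed in the construction. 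For (iii), the scaling operator $U$ of (\ref{scalingoperator}) dilates the radial coordinate by $\nu+1$, so any $\xi \in \bigcap_n U^n V_0$ would have radial support forced toward the origin as $n \to -\infty$; since every nonzero element of $V_0$ has radial support bounded away from zero in the $q$-lattice sense, the intersection must be $\{0\}$.

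The main obstacle is (iv), the density of $\bigvee_n U^n V_0$ in $L^2({\bf C},\nu_q)$. Because $d_q$ is supported on the discrete radial lattice $\{q^k : k \in {\bf Z}\}$, the natural notion of step function in this setting is one that is constant on annular sectors $\{q^{j} \leq |z| < q^{j-1}\}$ intersected with angular wedges of width $\alpha$. I would show that every such sector characteristic function can be approximated in $L^2({\bf C},\nu_q)$ by finite linear combinations of $U^n$-translates of $\phi$. The potential technical point is that the scaling ratios $(\nu+1)^{-n}$ need not line up with the radial lattice $\{q^k\}$, so exact equality of dilates of $\phi$ with sector indicators generally fails; however, since $\{(\nu+1)^{-n}\}_{n\geq 0}$ accumulates at zero, the linear span of the $U^n$-orbit is dense in the space of such step functions, and density of the latter in $L^2({\bf C},\nu_q)$ follows directly from the construction of the Jackson integral as a weighted sum over the $q$-lattice. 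Combining these two density statements yields (iv), and completes the verification that $\{V_j\}$ is a genuine multiresolution for $L^2({\bf C},\nu_q)$.
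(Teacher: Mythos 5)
Your verification of i)--iv) follows essentially the same route as the paper: orthogonality from the disjoint supports of the $\phi^{(k,N)}$, property ii) as the standing hypothesis $U\phi\in V_0$, iii) from the scaling structure, and iv) from density of step functions in $L^{2}\left( {\bf C},d\nu _{q}\right)$, with the Jackson integral replacing the planar Lebesgue integral exactly as the paper does. Note only that the paper's proof environment continues beyond these four properties to derive the $q$-quadrature identity for $m_{0}$ via the $q$-Hankel transform and Plancherel theorem, but that material is preparation for the subsequent Cuntz-algebra construction rather than part of establishing i)--iv) themselves.
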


\begin{proof}
i) follows from the fact that the $\phi $'s
have disjoint support on $L^{2}\left( {\bf C},\nu _{q}\right) $. ii) holds
as before
and iii) follows from i). By the density of step functions on $%
L^{2}\left( {\bf C},d\nu _{q}\right) $ also iv) follows.

If $\xi \in V_{-j}$ then $U^{j}\xi \in V_{0}$. By applying the $q$-Hankel
transform of order $\nu $ we get
\[
H_{\nu }^{q}\left( \phi \left[ \left( \left| z\right| +k\right)
\exp \left( i\left(
\mathop{\rm Arg}%
\left( z\right) +N\alpha \right) \right) \right] ;t\right) =m_{0}\left(
t\right) H_{0}^{q}\left( \frac{1}{z};t\right) ,
\]%
where we denote by $H_{\nu }^{q}\left( z;t\right) $ the $q$-Hankel transform
to avoid confusion wth the usual non deformed transform. The Plancherel Theorem for Hankel transforms extends in
a natural way to the case of $q$-Hankel transforms where it takes the
following form:%
\[
\int \mkern-9mu\int_{{\bf C}}tF\left( t\right) G\left( t\right) \,d\nu
_{q}\left( t\right) =\int \mkern-9mu\int_{{\bf C}}zf\left( z\right) g\left(
z\right) \,d\nu _{q}\left( z\right) ,
\]%
where $d\nu _{q}\left( z\right)$ is the $q$-measure (\ref{q-measure}).

Then by using the orthogonality of $\phi \left[ \left( \left| z\right|
+k\right) \exp \left( i\left(
\mathop{\rm Arg}%
\left( z\right) +N\alpha \right) \right) \right] _{k\in {\bf Z}}$ and the
following fact:
\[
\int_{0}^{1}\left| z\right| ^{\nu +1}\,d_{q}\left| z\right| =\frac{1-q}{%
1-q^{\nu +2}}, 0<q<1
\]%
we have
\begin{eqnarray}
\langle \phi ^{\left( k,N\right) }\mid\phi ^{\left( 0,0\right) }\rangle &\equiv
&\int \mkern-9mu\int_{{\bf C}}\phi ^{\left( k,N\right) }\left( z\right)
\overline{\phi ^{\left( 0,0\right) }\left( z\right) }z\,d\mu _{q}\left(
z\right)  \label{inner-prod1} \\
\ &=&\int_{0}^{\infty }\int_{0}^{2\pi }\phi \left[ \left( \left| z\right|
+k\right) \exp \left( i\left(
\mathop{\rm Arg}%
\left( z\right) +N\alpha \right) \right) \right] \nonumber \\
&\times &\overline{\phi \left[
\left| z\right| \exp \left( i%
\mathop{\rm Arg}%
\left( z\right) \right) \right] }  \nonumber \\
&\times &\left| z\right| ^{\nu +1}\exp \left( i%
\mathop{\rm Arg}%
\left( z\right) \left( \nu +1\right) \right) \,d\left| z\right| \,d%
\mathop{\rm Arg}%
\left( z\right)  \nonumber \\
\ &=&\int_{0}^{1}\left| z\right| ^{\nu +1}\delta _{k,0}\,d\left| z\right|
\nonumber \\
&\times &\int_{0}^{\alpha }\exp \left( i%
\mathop{\rm Arg}%
\left( z\right) \left( \nu +1\right) \right) \delta _{N,0}\,d%
\mathop{\rm Arg}%
\left( z\right)  \nonumber \\
\ &=&\frac{1-q}{1-q^{\nu +2}}\delta _{k,0}\frac{e^{i\alpha \left( \nu
+1\right) }-1}{i\left( \nu +1\right) }\delta _{N,0}.  \nonumber
\end{eqnarray}
By the Plancherel theorem, we then have
\begin{eqnarray*}
\frac{1-q}{1-q^{\nu +2}}\delta _{k,0}\frac{e^{i\alpha \left( \nu +1\right)
}-1}{i\left( \nu +1\right) }\delta _{N,0}\\
=\int \mkern-9mu\int_{{\bf C}%
}H_{\nu }^{q}\left( \phi ^{\left( k,N\right) }\left( z\right) ;t\right)
\overline{H_{\nu }^{q}\left( \phi ^{\left( 0,0\right) }\left( z\right)
;t\right) }t\,d\nu _{q}\left( t\right) .
\end{eqnarray*}
The left-hand side can then be rewritten as
\[
\int_{0}^{1}\int_{0}^{\alpha }H_{\nu }^{q}\left( \phi ^{\left( k,N\right)
}\left( z\right) ;t\right) \overline{H_{\nu }^{q}\left( \phi ^{\left(
0,0\right) }\left( z\right) ;t\right) }t^{\nu +1}\,d_{q}t\,d%
\mathop{\rm Arg}%
\left( t\right) .
\]%
Upon a change of variable setting $\theta =%
\mathop{\rm Arg}%
\left( t+2\pi j\right) $,$j\in\textbf{Z}$ the latter expression is equal
\[
\int_{0}^{1}\left| t\right| ^{\nu +1}\,d_{q}\left| t\right| \int_{0}^{\alpha
}e^{i\theta \left( \nu +1\right) }\sum_{j}H_{\nu }^{q}\left( \phi ^{\left(
k,N\right) }\left( z\right) ;\left| t\right| e^{i\theta }\right) \overline{%
H_{\nu }^{q}\left( \phi ^{\left( 0,0\right) }\left( z\right) ;\left|
t\right| e^{i\theta }\right) }\,d\theta .
\]%
Comparing the previous two formulae for $k=N=0$ we get
\[
\sum_{j}\left| H_{\nu }^{q}\left( \phi ^{\left( 0,0\right) };\left| t\right|
e^{i\theta }\right) \right| ^{2}-\frac{1-q}{1-q^{\nu +2}}\frac{e^{i\alpha
\left( \nu +1\right) }-1}{i\left( \nu +1\right) }=0\qquad {\nu_q -\ \rm a.e.}
\]%
Rewriting the above in terms of $m_{0}$ we have
\[
\sum_{j}\left| m_{0}\left( te^{2\pi ij}\right) \right| ^{2}\left|
H_{0}^{q}\left( \frac{1}{z};\left| t\right| e^{i\theta
}\right) \right| ^{2}=\frac{1-q}{1-q^{\nu +2}}\frac{e^{i\alpha \left( \nu
+1\right) }-1}{i\left( \nu +1\right) }.
\]
From (\ref{inner-prod1}) we have
\begin{eqnarray*}
\int_{0}^{1}\left| t\right| ^{\nu -1}\,d_{q}\left| t\right| \int_{0}^{\alpha
}e^{i\theta \left( \nu +1\right) }\sum_{j}\left| m_{0}\left( te^{2\pi
ij}\right) \right| ^{2}\,d\theta \\
=\frac{1-q}{1-q^{\nu +2}}\delta _{k,0}
\frac{e^{i\alpha \left( \nu +1\right) }-1}{i\left( \nu +1\right) }\delta
_{N,0} \\
=\frac{1-q^{\nu }}{1-q^{\nu +2}}\int_{0}^{1}\left| t\right| ^{\nu
-1}
\,d_{q}\left| t\right|
\int_{0}^{\alpha }e^{i\theta \left( \nu +1\right)
}\,d\theta ,
\end{eqnarray*}%
thus
\[
\left( \frac{1}{\nu +1}\right)
\sum_{j}\left| m_{0}\left( te^{2\pi ij/\left( \nu +1\right) }\right) \right|
^{2}=\frac{1-q^{\nu }}{1-q^{\nu +2}}.
\]
Set $c_{q}=\frac{1-q^{\nu }}{1-q^{\nu +2}}$; thus we get
$\frac{1}{c_{q}\left( \nu +1\right) }
\sum_{j}\left| m_{0}\left( te^{2\pi ij/\left( \nu +1\right) }\right) \right|
^{2}=1$.

We notice that the Bessel functions have a ``multiplicative periodicity'' on
the unit circle in the following sense:
\[
J_{\nu }\left( ze^{\pi ik}\right) =e^{\pi ik\nu }J_{\nu }\left( z\right) .
\]

This implies that
\[
c_{q}^{-1}\sum_{j=0}^{\nu }\left| m_{0}\left( te^{2\pi ij/\left( \nu +1\right)
}\right) \right| ^{2}=\left( \nu +1 \right) .
\]%
Thus a $q$-analogue of Theorem \ref{Thm1} holds. As in the previous section
we construct representations of the Cuntz algebra in terms of the functions $%
m_{i}$ whose existence is guaranteed from Corollary 4.2 of Ref.\
\CITE{Br-Jo2}.

As before we construct representations of the algebra $O_{\nu +1}$ associated
to the above multiresolution for the $q$-deformed case. The representations are
realized on a Hilbert space $H=L^{2}\left( {\bf C},d\nu _{q}\right) $ where
the measure is given by $d\nu _{q}\left( z\right) =z^{\nu }\,d_{q}z$.
\end{proof}

We now turn to the representation of the Cuntz algebra $O_{\nu +1}$. It is given in terms
of certain maps
\begin{equation}
\sigma _{k}\text{: }\Omega \longrightarrow \Omega ,\qquad \sigma _{k}\left(
z\right) =\sigma _{0}\left( z\right) e^{ik2\pi /\left( \nu +1\right) }, \nu\in\textbf{N}
\label{representationmaps}
\end{equation}
where
\[
\sigma _{0}\left( z\right) ^{\nu +1}=z, z\in\textbf{C},
\]%
such that $\mu _{q}\left( \sigma _{i}\left( \Omega \right) \cap \sigma
_{j}\left( \Omega \right) \right) =0$ for $i\neq j$. Also, for $f\in L^{2}\left( {\bf C},d\nu
_{q}\right) $
\begin{equation}
\int_{{\bf C}}f\left( z\right) \,d\nu _{q}\left( z\right) =\sum_{r\in {\bf Z}
_{\nu +1}}\rho _{r}\int_{{\bf C}}f\left( \sigma _{r}\left( z\right) \right)
\,d\nu _{q}\left( z\right) .  \label{nuova1}
\end{equation}%
In fact, we have $\mu _{q}\left( \sigma _{r}\left( E\right) \right) =\rho
_{r}\mu _{q}\left( E\right) $, for Borel subsets $E\subset {\bf C}$.

The representation takes the following form on $L^{2}\left( {\bf C},d\nu
_{q}\right) $:%
\[
\left( S_{k}\xi \right) \left( z\right) =m_{k}\left( z\right) \xi \left(
z^{\nu +1}\right) ,
\]%
where the functions $m_{k}$ are obtained from the above multiresolution
construction. Then we have
\[
\left( S_{k}^{\ast }\xi \right) \left( z\right) =\sum_{r\in {\bf Z}_{\nu
+1}}
c_{q}^{-1}
\rho _{r}\overline{m_{k}\left( \sigma _{r}\left( z\right) \right) }\xi
\left( \sigma _{r}\left( z\right) \right) .
\]

Thus:
\begin{eqnarray*}
\left( S_{k}^{\ast }S_{k^{\prime }}\xi \right) \left( z\right) &=&\sum_{r\in
{\bf Z}_{\nu +1}}
c_{q}^{-1}
\rho _{r}\overline{m_{k}\left( \sigma _{r}\left( z\right)
\right) }m_{k^{\prime }}\left( \sigma _{r}\left( z\right)
\right) \xi \left( \sigma \sigma _{r}\left( z\right) \right)
\\
&=&\delta _{k,k^{\prime }}\xi \left( z\right)
,
\end{eqnarray*}%
by the unitarity of the matrix $M\left( z\right) $.
We have used the convention
$\sigma \left( z\right) =z^{\nu +1}$
and the fact that
$\sigma \circ \sigma_{r}=\mathop{\rm id}$
for all $r$.
It is easy
similarly
to verify
that
\[
\sum_{k\in {\bf Z}_{\nu +1}}\left( S_{k}S_{k}^{\ast }\xi \right) \left(
z\right) =\xi \left( z\right) .
\]%
As a result, we then have a representation of $O_{\nu +1}$.

\section{\label{MUL}Multiresolution analysis}

We study now a particular case of a construction of a multiresolution. We
then see how to construct a representation of the Cuntz algebra. It is
interesting to see that for the corresponding representation so constructed
we get a
$q$-number related to the modulus of a Markov trace \cite{ChPr94}
for compact quantum groups of
type B.\cite{Pa-La}

Let us consider
\[
V_{0}=%
\mathop{\rm closed\;span}%
\left\{ \left[ h\left( q^{k}-z\right) -h\left( q^{k+1}-z\right) \right]
\right\} _{k\in {\bf Z}}, 0<q<1
\]%
Consider the step function given by
\begin{equation}
\phi _{\nu }^{\left( k\right) }\left( z,q\right) =\left[ h\left(
q^{k}-z\right) -h\left( q^{k+1}-z\right) \right] .  \label{phikq}
\end{equation}%
Set
\[
\left[ \nu +1\right] _{q^{2}}=\frac{1-q^{2\left( \nu +1\right) }}{1-q^{2}}
\]%
and define the scaling
\[
Uf\left( z\right) =\left( \nu +1\right) ^{-\frac{1}{2}}f\left( \left( \nu
+1\right) ^{-1}z\right) , f\in L^{2}\left( {\bf C},\nu _{q}\right)
\]%
Assume $U\phi _{\nu }^{\left( k\right) }\in V_{0}$, then
\[
U\phi _{\nu }^{\left( k\right) }\left( z,q\right) =\sum_{k}a_{k}\left[
h\left( q^{k}-z\right) -h\left( q^{k+1}-z\right) \right] .
\]%
It follows
\[
U^{j}\phi _{\nu }^{\left( k\right) }\left( z,q\right) =\sum_{k}a_{k}\left[
h\left( q^{k}-\frac{z}{\left( \nu +1\right) ^{j}}\right) -h\left( q^{k+1}-%
\frac{z}{\left( \nu +1\right) ^{j}}\right) \right] .
\]%
Let $V_{j}=U^{j}V_{0}$, so that if $f\in V_{j}$, $U^{-j}f\in V_{0}$.
The set $\left\{ \phi _{\nu }^{\left( k\right) }\right\} _{k\in Z}$ is an
orthonormal set in $L^{2}\left( {\bf C}\right) $. In fact
\[
h\left( q^{k}-z\right) -h\left( q^{k+1}-z\right) =\left\{
\begin{array}{lll}
1 &  & \text{if }q^{k+1}<\left| z\right| <q^{k}, \\
0 &  & \text{otherwise,}%
\end{array}%
\right.
\]%
are defined for $q^{k+1}<\left| z\right| <q^{k}$ in the annulus of $%
r=q^{k+1} $, $R=q^{k}$. It follows that the set (\ref{phikq}) $\left\{ \phi
_{\nu }^{\left( k\right) }\left( z,q\right) \right\} _{k\in {\bf Z}}$ is
orthogonal in $L^{2}\left( {\bf C}\right) $ since the functions $\phi _{\nu
}^{\left( k\right) }$ have disjoint support. Actually the set is orthogonal
in $L^{2}\left( {\bf T}\right) $ since for $k\longrightarrow \infty $, we have $%
q^{k}\longrightarrow 0$, and for $k\longrightarrow 0$, we have $q^{k}\longrightarrow
1$. Let
\begin{equation}
\xi \left( z\right) =\sum_{k}a_{k}\left[ h\left( q^{k}-z\right) -h\left(
q^{k+1}-z\right) \right] .  \label{asterisco}
\end{equation}%
By applying the $q$-Hankel transform
$\xi\rightarrow\hat\xi$ to both sides of (\ref{asterisco}) we
get then
\[
\hat{\xi}\left( t\right) =\sum_{k}a_{k}H_{\nu }^{q}\left[ \left( h\left(
q^{k}-z\right) -h\left( q^{k+1}-z\right) \right) ;t\right] ,
\]%
which implies
\begin{eqnarray*}
\hat{\xi}\left( t\right) = H_{0}^{q}\left( \frac{1}{z};t\right) \times\\
\left[ \sum_{k}a_{k}\left( q^{k\left( \nu
+1\right) }J_{\nu +1}\left( \left( 1-q\right) tq^{k};q\right) -q^{\left(
k+1\right) \left( \nu +1\right) }J_{\nu +1}\left( \left( 1-q\right)
tq^{k+1};q\right) \right) \right] \\
=\left[ \sum_{k}a_{k}q^{k\left( \nu +1\right) }\right] \left[ J_{\nu
+1}\left( \left( 1-q\right) tq^{k};q\right) -q^{\nu +1}J_{\nu +1}\left(
\left( 1-q\right) tq^{k+1};q\right) \right] \\
\times H_{0}^{q}\left( \frac{1}{z};t\right) .
\end{eqnarray*}

Using the Plancherel theorem for $q$-Hankel transforms
and orthogonality of $\left\{ \phi
_{\nu }^{\left( k\right) }\right\} _{k\in {\bf Z}}$ as before,
since we have
\[
\delta _{k,0}=1=\frac{1}{1-q}\int_{q}^{1}t\,d_{q}t,
\]%
the left-hand side becomes then

\begin{eqnarray*}
 0&=&\int_{q}^{1}\left[ \sum_{j\in {\bf Z}}q^{2j}H_{\nu }^{q}\left( \left(
h\left( q^{k}-z\right) -h\left( q^{k+1}-z\right) ;q^{j}s\right) \right) \right. \\
&\times &\overline{H_{\nu }^{q}\left( \left( h\left( 1-z\right) -h\left(
q^{1}-z\right) ;q^{j}s\right) \right) }
\left. -\frac{1}{1-q^{2\left( \nu +1\right) }}\right]
s\,d_{q}s,
\end{eqnarray*}
so that almost everywhere with respect to $d_q$,

\begin{eqnarray*}
 \label{hankel-1}
&&\sum_{j\in {\bf Z}}q^{2j}H_{\nu }^{q}\left( \left( h\left( q^{k}-z\right)
-h\left( q^{k+1}-z\right) ;q^{j}s\right) \right) \\
&\times &\overline{H_{\nu}^{q}\left( \left( h\left( 1-z\right) -h\left( q^{1}-z\right);q^{j}s\right)
\right)} =\frac{1}{1-q^{2\left( \nu +1\right) }}. 
\end{eqnarray*}%
Now we have by using the above 
\begin{eqnarray*}
&&\sum_{j\in {\bf Z}}q^{2j}H_{\nu }^{q}\left( \left( h\left( q^{k}-z\right)
-h\left( q^{k+1}-z\right) ;q^{j}t\right) \right) \\
&\times &\overline{H_{\nu}^{q}\left( \left( h\left( 1-z\right) -h\left( q^{1}-z\right)
;q^{j}t\right) \right) } =\sum_{j\in {\bf Z}}\left| m_{0}\left( tq^{j}\right) \right|
^{2}\left| H_{0}^{q}\left( \frac{1}{z};tq^{j}\right) \right| ^{2}.
\end{eqnarray*}%
Hence we have:%
\[
\sum_{j\in {\bf Z}}\left| m_{0}\left( tq^{j}\right) \right| ^{2}\left|
H_{0}^{q}\left( \frac{1}{z};tq^{j}\right) \right| ^{2}=\frac{1}{%
1-q^{2\left( \nu +1\right) }}.
\]

By a similar argument as above we get the special property for the function $%
m_{0}$:%
\begin{equation}
\sum_{j\in {\bf Z}}\left| m_{0}\left( tq^{j}\right) \right| ^{2}\left|
H_{0}^{q}\left( \frac{1}{z};tq^{j}\right) \right| ^{2}=\frac{1}{1-q^{2\left(
\nu +1\right) }}.  \label{sum-m0}
\end{equation}

Observe that in this case since $q\leq \left| t\right| \leq 1$ and then from
$q\leq q^{1-j}\leq \left| t\right| q^{-j}\leq q^{-j}\leq 1$ we have $\left|
t\right| \leq q^{j}\leq 1$ and then
\[
j\leq \frac{\log \left| t\right| }{\log q}.
\]%
For $\left| t\right| =q$, $j=1$ and for $\left| t\right| =1$, $j=0$. Hence
the sum in (\ref{sum-m0}) reduces to a finite sum, by using a similar
argument as for the Haar wavelet multiresolution.\cite{dau} For a scale $%
\nu +1$, $\nu\in\textbf{N}$ we thus have
\[
\sum_{j=0}^{\nu }\left| m_{0}\left( tq^{j}\right) \right| ^{2}\left|
H_{0}^{q}\left( \frac{1}{z};tq^{j}\right) \right| ^{2}=\frac{1}{%
1-q^{2\left( \nu +1\right) }}.
\]

In this case we should note that $\left| H_{0}\left( \frac{1}{z}%
;tq^{j}\right) \right| ^{2}=q^{-2j}$. Thus it follows:%
\[
\sum_{j=0}^{\nu }q^{-2j}\left| m_{0}\left( tq^{j}\right) \right| ^{2}=\frac{1
}{1-q^{2\left( \nu +1\right) }}.
\]
Set $d_{q}=\frac{1}{1-q^{2\left( \nu +1\right) }}$; then
$d_{q}^{-1}\sum_{j=0}^{\nu }q^{-2j}
\left| m_{0}\left( tq^{j}\right) \right| ^{2}=1$.

With the function $m_{0}$ given choose $m_{1},\dots ,m_{\nu }$ in $%
L^{2}\left( {\bf T,},\nu_{q}\right) $ such that
\begin{equation}
\sum_{j=0}^{\nu }q^{-2j}m_{r}\left( tq^{j}\right) \overline{m_{r^{\prime
}}\left( tq^{j}\right) }=\delta _{r,r^{\prime }}\frac{1}{1-q^{2\left( \nu
+1\right) }} .  \label{sum2}
\end{equation}

Define the functions $\psi _{1},\psi _{2},\dots ,\psi _{\nu }$ by the
formula:
\begin{equation}
H_{\nu }^{q}\left( \psi _{r}^{\left( j,m\right) }\left( z\right) ;t\left(
\nu +1\right) \right) =m_{r}\left( t\right) H_{0}^{q}\left( \frac{1}{z}%
;t\right) .  \label{fun1}
\end{equation}

Concretely the functions in (\ref{fun1}) are $\psi _{r}^{\left( j,m\right)
}\left( z\right) =\psi _{r}\left( \left( \nu +1\right) ^{-m}z-q^{j}\right) $%
. Then using (\ref{sum2}) and (\ref{fun1}) it follows that
\[
\left\{ \left( \nu +1\right) ^{\frac{-m}{2}}\psi _{r}^{\left( j,m\right)
}\left( z\right) \right\} _{j,m}
\]%
is an orthogonal basis for the space $V_{-1}\cap V_{0}^{\bot }$ and then by
iii) and iv) they form an orthogonal basis for $L^{2}\left( {\bf C},\nu
_{q}\right) $.

Now reformulating (\ref{sum2}), the orthonormality of $\left\{ \left( \nu
+1\right) ^{\frac{-m}{2}}\psi _{r}^{\left( j,m\right) }\left( z\right)
\right\} _{j,m}$ is equivalent to the following matrix $M\left( t\right)$ with entries,
\[
\frac{1}{\sqrt{d_{q}\left( \nu +1\right) }}
\left(
\begin{array}{cccc}
\sqrt{\rho _{0}}m_{0}\left( \sigma _{0}\left( t\right) \right) &
\sqrt{\rho _{1}}m_{0}\left( \sigma _{1}\left( t\right) \right) & \dots &
\sqrt{\rho _{\nu }}m_{0}\left( \sigma _{\nu }\left( t\right) \right) \\
\sqrt{\rho _{0}}m_{1}\left( \sigma _{0}\left( t\right) \right) &
\sqrt{\rho _{1}}m_{1}\left( \sigma _{1}\left( t\right) \right) & \dots &
\sqrt{\rho _{\nu }}m_{1}\left( \sigma _{\nu }\left( t\right) \right) \\
\vdots & \vdots & \ddots & \vdots \\
\sqrt{\rho _{0}}m_{\nu }\left( \sigma _{0}\left( t\right) \right) &
\sqrt{\rho _{1}}m_{\nu }\left( \sigma _{1}\left( t\right) \right) & \dots &
\sqrt{\rho _{\nu }}m_{\nu }\left( \sigma _{\nu }\left( t\right) \right)%
\end{array}%
\right) ,
\]%
being unitary, where $\rho _{j}=q^{-2j}$.

The class of representations of the algebra $O_{\nu +1}$ associated to the
above multiresolution construction is given as in the previous cases in
terms of the functions $m_{i}$ and of the maps $\sigma _{i}$. The
representations are realized on the Hilbert space $H=L^{2}\left( {\bf C}%
,\nu _{q}\right) $, where as before $d\nu _{q}\left( z\right) =z^{\nu
}\,d_{q}z.$ A similar construction works for the case $q=1$ where we use
classical Bessel functions and the usual Hankel transform.

Define the representation of the Cuntz algebra in terms of certain maps
(analogous to (\ref{representationmaps})):
\[
\sigma _{i}\colon \Omega \longrightarrow \Omega ,\qquad \sigma _{i}\left(
z\right) =\sigma _{0}\left( z\right) q^{i},
\]%
where
\[
\sigma _{0}\left( z\right) ^{\nu +1}=z,
\]%
such that $\mu _{q}\left( \sigma _{i}\left( \Omega \right) \cap \sigma
_{j}\left( \Omega \right) \right) =0$ for $i\neq j$.
Hence, the system (\ref{eqCunpound.3}) here will be the
$N$-sheeted Riemann surface of $\sqrt[N]{z}$.
Also for $L^{2}\left( {\bf C},d\nu
_{q}\right) $
\begin{equation}
\int_{{\bf C}}f\left( z\right) \,d\nu _{q}\left( z\right) =\sum_{r\in {\bf Z}
_{\nu +1}}\rho _{r}\int_{{\bf C}}f\left( \sigma _{r}\left( z\right) \right)
\,d\nu _{q}\left( z\right) ,  \label{nuova2}
\end{equation}
which is the analogue of (\ref{nuova}).
In fact, $\mu _{q}\left( \sigma _{r}\left( E\right) \right) =\rho _{r}\mu
_{q}\left( E\right) $ with $\rho _{i}=q^{-2i}$, for Borel sets $E\subset {\bf
C}$.

The representation takes the following form on $L^{2}\left( {\bf C},\nu
_{q}\right) $:%
\[
\left( S_{k}\xi \right) \left( z\right) =m_{k}\left( z\right) \xi \left(
z^{\nu +1}\right) ,
\]%
where the functions $m_{k}$ are obtained from the above multiresolution
construction. By using (\ref{nuova2}) we have
\[
\left( S_{k}^{\ast }\xi \right) \left( z\right) =\sum_{r\in {\bf Z}_{\nu
+1}}
d_{q}^{-1}
\rho _{r}\overline{m_{k}\left( \sigma _{r}\left( z\right) \right) }\xi
\left( \sigma _{r}\left( z\right) \right) .
\]

Thus:
\begin{eqnarray*}
\left( S_{k}^{\ast }S_{k^{\prime }}\xi \right) \left( z\right) &=&\sum_{r\in
{\bf Z}_{\nu +1}}
d_{q}^{-1}
\rho _{r}\overline{m_{k}\left( \sigma _{r}\left( z\right)
\right) }m_{k^{\prime }}\left( \sigma _{r}\left( z\right)
\right) \xi \left( \sigma \sigma _{r}\left( z\right) \right)
\\
&=&\delta _{k,k^{\prime }}\xi \left( z\right) ,
\end{eqnarray*}%
by the unitarity of the matrix $M\left( z\right) $. It is easy to verify
that
\[
\sum_{k\in {\bf Z}_{\nu +1}}\left( S_{k}S_{k}^{\ast }\xi \right) \left(
z\right) =\xi \left( z\right) .
\]

We then have a representation of $O_{\nu +1}$. The interesting feature in
this case is the fact that the $q$-number
\[
\frac{1}{1-q^{2\nu +2}}=\frac{1}{1-q^{2}}\left[ \nu +1\right] _{q^{2}}^{-1}
\]%
appearing in the orthogonality relations is exactly a multiple of the
modulus of the Markov trace\cite{ChPr94}
associated to the compact quantum group of type
B.

Then we can perform a Fourier-type analysis over the cyclic group $Z_{\nu +1}
$ introducing
\[
A_{i,j}\left( z\right) =
\left( \frac{1}{1-q^{2\left( \nu +1\right) }}\right) ^{-1}
\sum_{\omega
\colon \omega ^{\nu +1}=z}\omega ^{-j}m_{i}\left( \omega \right)
\]%
and the inverse transform
\[
m_{i}\left( z\right) =\sum_{j=0}^{\nu }z^{j}A_{i,j}\left( z^{\nu +1}\right) .
\]

\section{\label{TIG}Tight Frames, deformed Tight Frames and representations
of $O_{\protect\nu +1}$}

In this section we construct tight frames giving rise to certain
representations of the Cuntz algebra.

The representations we will consider are realized on a Hilbert space $%
H=L^2\left( \Omega ,\mu \right) $ where $\Omega $ is a measure space and $%
\mu $ is a probability measure on $\Omega $.

A frame is a set of non-independent vectors which can be used to construct
an explicit and complete expansion for every vector in the space. Thus we
have the following definition:

\begin{definition}
A family of functions $\left\{ \varphi _{j}\right\} _{j\in J}$ in a Hilbert
space $H$ is called a frame if there exist $0<A<\infty$, $0<\textit{}B<\infty $ so that for
all $f$ in $H$ we have:
\[
A\left\| f\right\| ^{2}\leq \sum_{j\in J}\left| \left\langle f\mid\varphi
_{j}\right\rangle \right| ^{2}\leq B\left\| f\right\| ^{2}.
\]
\end{definition}

We call $A$ and $B$ the frame bounds. If the two frame bounds are equal then
as in Ref.\ \CITE{dau} the frame will be called a tight frame. Thus in a tight
frame we have, for all $f\in H$,
\[
\text{ }\sum_{j\in J}\left| \left\langle f\mid\varphi _{j}\right\rangle \right|
^{2}=A\left\| f\right\| ^{2},
\]%
where $\left\langle f\mid\varphi _{j}\right\rangle $ are the Fourier
coefficients.

We construct tight frames but instead of a Fourier transform we use the
Hankel transform ( defined in the previous sections). We will see that the
construction will then extend to a $q$-deformed tight frame.

Let us start with functions $m_{0},m_{1},\dots ,m_{\nu }\colon {\bf %
T\rightarrow C}$ such that the following $\nu +1\times \nu +1$ matrix
\[
M\left( t\right) =
\frac{1}{\sqrt{\nu +1}}
\left(
\begin{array}{cccc}
m_{0}\left( \sigma _{0}\left( t\right) \right)  & m_{0}\left( \sigma
_{1}\left( t\right) \right)  & \dots  & m_{0}\left( \sigma _{\nu }\left(
t\right) \right)  \\
m_{1}\left( \sigma _{0}\left( t\right) \right)  & m_{1}\left( \sigma
_{1}\left( t\right) \right)  & \dots  & m_{1}\left( \sigma _{\nu }\left(
t\right) \right)  \\
\vdots  & \vdots  & \ddots  & \vdots  \\
m_{\nu }\left( \sigma _{0}\left( t\right) \right)  & m_{\nu }\left( \sigma
_{1}\left( t\right) \right)  & \dots  & m_{\nu }\left( \sigma _{\nu }\left(
t\right) \right)
\end{array}%
\right)
\]%
is unitary for almost all $z\in {\bf T}$. Assume that $m_{0}\left( 0\right)
=1$ and that the following infinite product:
\[
H_{k}\left( \varphi \left( z\right) ;t\right) =\prod_{l=1}^{\infty }
m_{0}\left( \left( \nu +1\right) ^{-l}t\right)
\]%
converges pointwise almost everywhere. By Ref.\ \CITE{dau} it follows from the
condition%
\[
\sum_{j=0}^{\nu }\left| m_{0}\left( te^{\pi ij}\right) \right| ^{2}=1
\]%
that $H_{k}\left( \varphi \left( z\right) ;t\right) \in L^{2}\left( {\bf T}%
\right) $, and that $\left\| \varphi \right\| _{2}\leq 1$. Let us now define
$\psi _{1},\psi _{2},\dots ,\psi _{\nu }$ by the formula:
\[
H_{k+j}\left( \psi _{r}^{\left( j,m\right) }\left( z\right) ;t\left( \nu
+1\right) \right) =m_{r}\left( t\right) H_{0}\left( \frac{1}{z};t\right) .
\]

Then we have that the system%
\[
\left\{ \psi _{r}^{\left( j,m\right) }\left( z\right) \right\} _{j,m}
\]%
is not an orthogonal set with respect to Lebesgue measure on $\textbf{R}$, so
$$
\left\{ \psi _{r}^{\left( j,m\right) }\left(
z\right) \right\} _{j,m}
$$
is not an orthogonal basis for $L^{2}\left( {\bf R}\right) $, but only a tight frame in the sense that
\[
\sum_{m,j,r}\left| \left\langle f\biggm|\psi _{r}^{\left( j,m\right) }\left(
z\right) \right\rangle \right| =\left\| f\right\| ^{2}
\]%
for all $f\in L^{2}\left( {\bf R}\right) $.

Let us specialize to the following case on the space $L^{2}\left( {\bf T,}\mu \right)$ with
$d\mu \left( z\right) =z^{-1}\,dz $:
\begin{eqnarray*}
m_{0}(z) &=&\sum_{k\in {\bf Z}}b_{k}J_{k}(z)\text{\quad and\quad }%
m_{r}\left( \sigma _{j}\left( z\right) \right) =\sum_{k\in {\bf Z}%
}b_{k}J_{k+r}\left( ze^{\pi ij}\right)  \\
&&\qquad \text{where }\sigma _{j}\left( z\right) =\sigma _{0}\left( z\right)
e^{\pi ij}.
\end{eqnarray*}

The unitarity of the matrix $M\left( z\right) $ implies the following
conditions:

{\bf 1}. For the diagonal entries we have:
\[
\sum_{r=0}^{\nu }\left| m_{r}\left( ze^{\pi ir}\right) \right|
^{2}=\sum_{r=0}^{\nu }\sum_{k,l}b_{k}\overline{b_{l}}
J_{k+r}(z)\overline{J_{l+r}(z)}e^{\pi ir\left( k-l\right) }.
\]

Since we have the following:
\[
\int_{\left| z\right| =1}\sum_{r=0}^{\nu }\left| m_{0}\left( \sigma
_{r}\left( z\right) \right) \right| ^{2}\,d\mu \left( z\right) =1=\frac{1}{%
2\pi i}\int_{\left| z\right| =1}d\mu \left( z\right) ,
\]%
for $k=l$ the
Residue Theorem gives the following:
\[
1=
\left( \nu +1\right)
\sum_{k}\left| b_{k}\right| ^{2}\frac{1}{\left( k!2^{k}\right) ^{2}}.
\]

{\bf 2.} For the off-diagonal entries, i.e., for
$k^{\prime }\neq l^{\prime }$, we have:%
\[
\sum_{r=0}^{\nu }m_{k^{\prime }}\left( te^{\pi ir}\right)
\overline{m_{l^{\prime }}\left(
te^{\pi ir}\right) }=0\,;
\]%
then we get $\sum_{k^{\prime },l^{\prime }}b_{k^{\prime }}\overline{c_{l^{\prime }}}
=0$ and then $\sum_{k^{\prime }}b_{k^{\prime }}\overline{c_{n+k^{\prime }}}=0$,
$l^{\prime }=n+k^{\prime }$, by using the ``multiplicative periodicity'' of the
Bessel functions with respect to the argument. Define now $\psi _{1},\psi _{2},
\dots ,\psi _{\nu }$ such that%
\begin{equation}
\left( \nu +1\right) ^{\frac{1}{2}}H_{k+j}\left( \psi _{r}\left( \left(
z\right) ;\left( \nu +1\right) t\right) \right) =m_{j}\left( t\right)
H_{0}\left( \frac{1}{z};t\right) .  \label{nuova-3}
\end{equation}%
( such $\psi$ exist by the above wavelet construction).
Then the $\left\{ \psi _{r}^{\left( j,m\right) }\left( z\right) \right\}
_{j,m}$ are not orthogonal in $L^{2}\left( {\bf R}%
\right) $ but they satisfy:
\[
\sum_{m,j,r}\left| \left\langle f\biggm|\psi _{r}^{\left( j,m\right) }\left(
z\right) \right\rangle \right| =\left\| f\right\| ^{2}
\]%
for all $f\in L^{2}\left( {\bf R}\right) $. This follows as in Ref.\
\CITE{dau},
Prop.~6.2.3, from the unitarity of the matrix of the $\left( m_{i,j}\right)
_{i,j}=\left( m_{i}\left( \sigma _{j}\left( z\right) \right) \right) _{i,j}$
and from the formula (\ref{nuova-3}). It then follows that the set $\left\{
\psi _{r}^{\left( j,m\right) }\left( z\right) \right\} _{j,m}$ is a tight
frame.

Let us look at the case of the deformed representations of the algebra $%
O_{\nu +1}$.
See Ref.\ \CITE{Pa-07} for a class of deformed
representations of the Cuntz algebra related to
the Jackson $q$-Bessel functions.
We consider the space $L^{2}\left( {\bf T,}\nu \right) $ as
before, where we take the measure given by
\[
d\nu \left( z\right) =z^{-1}\,dz,
\]
and using the $q$-Bessel functions previously defined instead of
classical Bessel functions.

Define the operators $S_{k}$ on $L^{2}\left( {\bf T},d\mu \right) $ by:
\[
\left( S_{k}\xi \right) \left( z\right) =m_{k}\left( z\right) \xi \left(
z^{\nu +1}\right) ,
\]%
where
\[
m_{0}(z)=\sum_{k\in {\bf Z}}b_{k}J_{k}(z;q)
\]%
and
\[
m_{r}\left( \sigma _{j}\left( z\right) \right) =\sum_{k\in {\bf Z}%
}b_{k}J_{k+r}\left( zq^{j};q\right) .
\]

Hence we have:%
\[
\left( S_{k}^{\ast }\xi \right) \left( z\right) =\sum_{r\in {\bf Z}_{\nu
+1}}\rho _{r}\overline{m_{k}\left( \sigma _{r}\left( z\right) \right) }\xi
\left( \sigma _{r}\left( z\right) \right) .
\]

Thus:
\texttt{\begin{eqnarray*}
\left( S_{k}^{\ast }S_{k^{\prime }}\xi \right) \left( z\right) &=&\sum_{r\in
{\bf Z}_{\nu +1}}\rho _{r}\overline{m_{k}
\left( \sigma _{r}\left( z\right) \right) }m_{k^{\prime }}
\left( \sigma _{r}\left( z\right) \right)
\xi \left( \sigma \sigma _{r}\left( z\right) \right)
\\
\ &=&\delta _{k,k^{\prime }}
\xi \left(
z\right)
\end{eqnarray*}}%
by using the unitarity of the matrix $M\left( z\right) $.

\section{\label{MCR}Markov chains and  representations
of $O_{N}$ and  $SO_q{(N)}$}

Let $(\Omega,\textrm{\slshape }F,P)$ be a given probability space and let $S=\textbf Z_N$ be the finite set ${0,1,...N}$.
An $S$-valued sequence of random variables $\xi_n$, $n\in\textbf N$ is called an $S$-valued Markov chain if for every $n\in\textbf N$ and all $s\in S$ we have:
\begin{eqnarray}\label{eqn-1}
P(\xi_{n+1}=s\mid \xi_0,\ldots \xi_n)=P(\xi_{n+1}=s\mid \xi_n). \label{eqCunpound.0}
\end{eqnarray}
where $P(\xi_{n+1}=s \mid \xi_0,\ldots \xi_n)$ denotes the conditional probability of the event $(\xi_n=s)$
with respect to the random variable $\xi_n$ and respectively to the field generated by the $\xi_n$ which we denote by
	$\sigma(\xi_n)$.
	Similarly, $P(\xi_{n+1}=s\mid \xi_0,\ldots \xi_n)$ is the conditional probability of $\xi_{n+1}=s$ with respect to $\sigma(\xi_0,\ldots \xi_n)$, the $\sigma$-field generated by $\xi_0,\ldots \xi_n$
	Formula (\ref{eqCunpound.0})is the Markov property of the chain $\xi_n$, $n\in \textbf N$. The set $S$ is called  the state space and the elements of $S$ are called the states.
	We construct a model associated to representations of the Cuntz algebra $O_N$ which is a Markov chain. The transition probabilities depend on a parameter $0<q<1$. The Markov chain $\textbf{P}$ gives rise to a random walk on the quantum group $SO_q{(N)}$.
	Let us start by constructing the Markov chain we are interested in. Denote by $M:={\left\{\xi_n \right\}}$ the following process where the  $\xi_n$ are random variables with state space $S=\textbf Z_N={{0,\ldots, N}}$. We define the following transition probabilities:
	\begin{eqnarray}
	p(r\mid s)=P({{\xi_1=r\mid \xi_0=s}}).  
	\end{eqnarray}
	as in the following transition matrix:
	\begin{displaymath}
	\mathbf{P}=
	{(\left[N\right]_q)}^{-1}\left(\begin{array}{ccccc}
	q&q^{N}&q^{N-1}&\ldots &q^{2}\\
	q^{2}&q&q^{N}&\ldots &q^{3}\\
	\vdots&\vdots&\vdots&\vdots\\
	q^{N}&q^{N-1}&q^{N-2}&\ldots &q\\
\end{array}\right)
\end{displaymath}
The matrix $P$ is  doubly stochastic since
  \[
  \sum_{s \in S} p(r\mid s)=1 
  \]
  and
  \[ 
  \sum_{r \in  S} p(r\mid s)=1
 \]

The Markov property  is clearly satisfied by construction. The transition probabilities can be written as
\[
p(r\mid s)=q^{\sigma_{s}{\left(r\right)}}
   \]

  where $ \sigma_{s}{\left(r\right)}=N+r-s+1$  mod N

\section{\label{ISP}Iterated subdivisions and projection valued measures}

Let us consider the family of representations of the Cuntz algebra $O_N$ where $N=\nu+1$ previously constructed. A given representation of $O_N$ restricted to its canonical maximal abelian subalgebra $C{(X)}$ for $X$ a Gelfand space induces naturally a projection-valued measure on $X$. The isometries generating $O_N$ provide subdivisions of the Hilbert space ${H}$ in view of
\begin{eqnarray*}
S_{i}^{\ast }S_{j}=\delta _{ij}{\bf 1} 
\end{eqnarray*}
and
\begin{eqnarray*}
\sum_{i=0}^{N }S_{i}S_{i}^{\ast }={\bf 1}. 
\end{eqnarray*}
In particular for every $k\in \textbf {N}$ the subspaces :
\begin{equation}
H{(a_1,a_2,\ldots,a_k)}:=S_{a_1}S_{a_2}\ldots S_{a_k}H
\end{equation}
are mutually orthogonal and
\begin{equation}
\sum_{i_1,i_2,\ldots,i_k}H{(a_1,a_2,\ldots,a_k)}:=H
\end{equation}
If $f\in H$ and $\left\|f\right\|=1$ then
\begin{equation}
\mu_{f}{(.)}:=<f,E(.)f>=\left\|E{(.)f}\right\|
\end{equation}
is a probability measure on the unit interval $\left[0,1\right]$.
We want to specialize $E{(.)}$ to our case and compute this measure which turns out to be related to the Markov chain constructed before.
Let us observe that the index labels ${(a_1,a_2,\ldots,a_k)}$ are used to assign $N$-adic
partitions (e.g. the intervals $\left[\frac{a_1}{N}+\ldots+\frac{a_k}{N^k},\frac{a_1}{N}+
+\ldots+\frac{a_k}{N^k}+\frac{1}{N^k}\right]$), then we have the mapping
\begin{equation}
{(a_1,\ldots, a_k)}\rightarrow H{(a_1,a_2,\ldots,a_k)}
\end{equation}
where the $(a_1,a_2,\ldots,a_k)\in \{(0,1,\ldots, N)\}$ and the length of the interval is $\frac {1}{N^k}$.
These partitions are a special case of endomorphisms
\begin{equation}
\sigma : X \rightarrow X
\end{equation}
 where $X$ is a compact Hausdorff space and $\sigma$ is  continuous and onto. Then for every $x\in X$ we have that $card{(\sigma^{-1}(x))}=\left\{x\in X/ \sigma{(y)}=x\right\}=N$.
There exists branches of the inverse, i.e. maps
\begin{equation}
\sigma_0,\ldots,\sigma_{N-1} : X \rightarrow X\\
\end{equation}
 such that
 \begin{equation}
   \sigma\circ\sigma_i=1_X
 \end{equation}
 for  each $0\leq  i<N$
the above intervals written in terms of the maps are:
\begin{eqnarray*}
I_k{(a)}=\left[\frac{a_1}{N}+\frac{a_2}{N^2}+\ldots+\frac{a_k}{N^k},\frac{a_1}{N}+
\frac{a_2}{N^2}+\ldots+\frac{a_k}{N^k}+\frac{1}{N^k}\right]\\
=\sigma_{a_1}\circ\sigma_{a_2}\ldots\circ\sigma_{a_k}\left(X\right)
\end{eqnarray*}
The system $\sigma_a=\sigma_{a_1}\circ\sigma_{a_2}\ldots\circ\sigma_{a_k}$
forms a set of branches for $\sigma^{k}=\sigma \circ\sigma \ldots\circ\sigma $.
and is called an $N$- adic systems of partitions of $X$. Thus for every $k\in \textbf Z_+$ 
$\left\{ J_k\left( a\right) \right\}$ 
is a partition indexed by $a\in\Gamma_N^{k}:\Gamma_N\times\Gamma_N\times\ldots\times\Gamma_N$.
On the other hand, given an Hilbert space $H$, a partition of projections in $H$ is a system
${P{(i)}}_{i\in I}$ of projections, i.e. $P{(i)}=P{(i)}^{\ast}=P{(i)}^2$ such that
\[
P{(i)}P{(j)}=0
\]
 if $i\neq j$ and
\[
\sum_{i\in \textbf I}P{(i)}=1_H
\]
Let $N\in \textbf{N}$, $N \geq 2$. Suppose that for every $k\in \textbf{N}$, there is a partition of projections ${P_k{(a)}}_{a\in \Gamma_N^k}$ such that every ${P_{k+1}{(a)}}$ is contained in some ${P_{k+1}{(b)}}$ i.e.
${P_{k}{(b)}}{P_{k+1}{(a)}}={P_{k+1}{(a)}}$ then ${P_k{(a)}}_{a\in \Gamma_N^k}$ is  a system of partitions of
$1_H$.
By Lemma 3.5  \CITE{jo} given an $N$-adic system of projections of $X$
and ${P_k{(a)}}_{{k\in \textbf Z_+},{a\in \Gamma_N^k}}$ an $N$ adic system of projections there is a unique normalized orthogonal projection-valued measure $E{(.)}$ defined on the Borel subsets of $X$ with values in the orthogonal projections of $H$ such that $E{(J_k{a})}={P_k{(a)}}$ for every $k\in \textbf Z_+$, $a\in\Gamma_N^k$.
Let $S_{i}$ be a representation of $O_N$ on $H$ and let $a={(a_1,a_2,\ldots,a_k)}\in \Gamma_N^k$
and $S_a:=S_{(a_1)}\ldots S_{(a_k)}$ then ${P_k{(a)}=S_{(a)}S_{(a)}^{*}}$.
Assuming then unitarity condition on the filters ${m_j}$ we get
\begin{eqnarray*}
\mu_{f}\left( I_k(a)\right)=\left| E{(I_k(a))}\right|^2=\left| S_{(a)}S_{(a)^*}f\right|^2\\
=<f,S_{(a)}S_{(a)}^{\ast}f>=\left\|S_{(a)}^* f \right\|^2\\
= \sum\left|<e_n,S_{(a)}^{\ast}f>\right|^2=\sum\left|<S_{(a)}e_n,f>\right|^2
\end{eqnarray*}
Using Plancherel theorem for Hankel transforms for $a=a_1$ we get that
\begin{equation}
\mu_{f}\left(I_1(a)\right)=
\sum_{j\in \textbf Z_N}\left|<H^q{\psi _a(z),q^j tN},f>\right|^2=\left(\left[N\right]_q^2\right)^{-1}
\end{equation}
Choosing $2j=N-r-s+1$ $mod$ $N$  we get that $<S_{a_{\left( j \right) }}e_n,f>$  gives
transition probabilities of the Markov chain constructed in the previous section.
\section{\label{MT}Markov trace and representations of the braid group $B_{\infty}$}
Let $\mathcal{F}$ be a category whose objects we denote by $\rho$, $\sigma$,
$\tau,\;\ldots$. The set of arrows between a
pair $\rho$, $\sigma $ of objects will be denoted by $(\rho ,\sigma)$ and
the identity of $\rho $ by $1_\rho $.  A BWM symmetry is a linear operator $G$ on $\rho \otimes \rho $
satisfying the Yang--Baxter equation
\[
G_1G_2G_1=G_2G_1G_2,
\]
and the following BWM condition: let $E=1-( q-q^{-1}) ^{-1}( G-G^{-1}) $.
Then
\[
EG=p^{-1}E,\qquad EGE=pE,\qquad EG^{-1}E=p^{-1}E ,
\]
where $p, q\in \mathbf{C-}\{ 0\} $ (to be specified later) and $G $ satisfy
the cubic equation
\[
( G-q) ( G+q^{-1}) ( G-p^{-1}) =0.
\]
Then $E$ is a complex multiple of a projection:
\[
E^2=( 1+( p-p^{-1}) ( q-q^{-1}) ^{-1}) E.
\]
In particular for our purpose let us consider $( \mathcal{F},G) $   a braided tensor C*-category associate to the quantum group $SO(N)$ \CITE{Pa-La}.
Let $g\in B_{\infty}$ be an element of the infinite braid group and let $p=p(g)$ be its associated
permutation written as a product of disjoint cycles of length $k_1,\ldots,k_m$ with $k_1+k_2+\ldots+k_m=n$. Denote by $\theta$ the braiding in the category.
Then
\begin{equation}
\omega^{(n)}\left({\theta}^{(n)}(g)\right)=\phi^{(n)}{\theta}^{(n)}(g)=
{d_q(\rho)}^{(m)}\left(\phi\left(\theta\left(\rho,\rho\right)\right)\otimes1_{{\rho}^{n-1}}\right)^{n}
\end{equation}
where 
$\phi ( T) =C^{*}\otimes 1_{\rho ^{n-1}}\circ 1_{\bar \rho }\otimes T\circ
C\otimes 1_{\rho ^{n-1}}$,
${(d_q(\rho)}=C^{\ast}\circ C$,  $C\in {(i,\rho\overline{\rho})}$ and
$\overline{C}\in {(i,\overline{\rho}\rho)}$ are intertwiners. For our purpose we let  $( \mathcal{F},G) $  be a braided tensor C*-category associate to the quantum group $SO(N)$ \CITE{Pa-La}
generated by a single object $\rho =H$ and having conjugate $\bar \rho $.
By $ \omega^{(n)}$ we denote the Markov trace for the BWM symmetries which has modulus
 $q^{(2m)}(d_q(\rho)^{(-1)}$ where $(d_q(\rho)$ is the quantum dimension.

For the quantum $\mathrm{SO}( N) $ (see \cite{Pa-La}), $N=2m+1$, the
operator $G$ has the form
\begin{eqnarray}
G &=&\sum_{i\ne 0}( qe_{i,i}\otimes e_{i,i}+q^{-1}e_{i,-i}\otimes e_{-i,i})
+e_{0,0}\otimes e_{0,0}+\sum_{i\ne j}e_{i,j}\otimes e_{j,i}  \nonumber \\
&&\qquad +( q-q^{-1}) ( \sum_{i<j}e_{i,i}\otimes e_{j,j}-\sum_{j<i}q^{\frac{%
i+j}2}e_{i,j}\otimes e_{-i,-j}) .  \label{G-def}
\end{eqnarray}
Here $\{ e_{i,j}\} $ is the $N\times N$ matrix with $1$ in the $( i,j) $
position and $0$ elsewhere; $G$ acts on a finite-dimensional Hilbert space $H
$ with basis indexed by $I=\{ -2m+1,-2m+3,...-3,-1,0,1,3,...,2m-1\} $. The
element $E=1-( q-q^{-1}) ^{-1}( G-G^{-1}) $ has the form
\[
E=\sum_{i,j}q^{\frac{i+j}2}e_{i,j}\otimes e_{-i,-j}.
\]
Then it is easy to see that $E^2=xE$, $x=\sum_iq^i$.

By \CITE{Pa-La}

\begin{itemize}
\item[(\emph{i})]  \textsl{There exists a faithful Markov trace $w$ given by
a left inverse via a conjugate $C\in ( i,\bar \rho \rho ) $ such that
$
w( G) =\frac{q^{2m}}{d_q( \rho ) }
$
and $E=C\circ C^{*}$ such that $E=( q-q^{-1}) ^{-1}( G-G^{-1}) $.}
\item[(\emph{ii})]  \textsl{There exists $\tau _q\in ( \mathbf{C},\rho ^2)$,
a group-like element, and non-degenerate mapping given by
$\tau _q\lambda=\lambda \sum_ie_i\otimes J^{-1}e_i$,
where $J=( q^{j/2}\delta _{i,\bar j}) $,
$\bar j=N+1-j$. Furthermore there exists an antisymmetric tensor
$\epsilon_{i_1\ldots i_N}:\mathbf{C}\longrightarrow H^N$
which gives a non-degenerate form.}
\end{itemize}

Thus we construct a random walk on $SO_q{(N)}$ induced from the Markov chain as follows:
choose $2m=N+j-i+1$ $mod$ $N$ and $d_q{{(\rho)}}=\left[N\right]_{(q^2)}$ presented in section \ref{MCR}.
Thus
\begin{equation}
w( G) =\frac{q^{2m}}{d_q( \rho ) }=p(j,i)
\end{equation}
Thus the transition probabilities $p(j,i)$ of the Markov chain give rise to a Markov trace on $SO_q(N)$ with
$N=2m+1$

{\textbf{Acknowledgments}}
P.J. was supported in part by a grant from the National Science Foundation (USA).
A.M.P. would like to thank the Max-Planck Institut f\"ur Mathematik 
in Bonn for support and excellent working conditions.


\end{document}